\newtheorem{lemma}{Lemma}[section]
\newtheorem{thm}{Theorem}[section]
\newtheorem{rmk}{Remark}[section]
\newcommand{\N}{\mathbb{N}}
\newcommand{\Zbb}{\mathbb{Z}}
\newcommand{\R}{\mathbb{R}}
\newcommand{\C}{\mathbb{C}}
\newcommand{\F}{\mathscr{F}}
\newcommand{\T}{\mathbb{T}}
\newcommand{\grad}{\nabla}
\newcommand{\p}{\partial}
\newcommand{\disc}{\mathbb{D}}
\newcommand{\D}{\Delta}
\newcommand{\rot}{\mathscr{R}}
\newcommand{\W}{\Omega}
\newcommand{\CD}{\mathcal{D}}
\newcommand{\al}{\alpha}
\newcommand{\be}{\beta}
\newcommand{\y}{\gamma}
\newcommand{\eps}{\varepsilon}
\newcommand{\kap}{\kappa}
\newcommand{\lam}{\lambda}
\newcommand{\Lam}{\Lambda}
\newcommand{\sig}{\sigma}
\newcommand{\tta}{\theta}
\newcommand{\z}{\zeta}
\newcommand{\Proj}{\mathbf{P}}
\newcommand{\id}{\mathrm{id}}
\newcommand{\Rea}{\mathfrak{Re}}
\newcommand{\Ima}{\mathfrak{Im}}
\newcommand{\norm}[1]{\left\Vert#1\right\Vert}
\newcommand{\pr}{\prime}
\newcommand{\Z}{\mathcal{Z}}
\newcommand{\Zbar}{\overline{Z}}
\newcommand{\zebar}{\overline{\z}}
\newcommand{\Hom}{\mathrm{Hom}}
\newcommand{\Cau}{\mathfrak{C}}
\newcommand{\Hil}{\mathfrak{H}}
\newcommand{\uhat}{\hat{u}}
\newcommand{\xitil}{\tilde{\xi}}
\newcommand{\lamtil}{\tilde{\lambda}}
\newcommand{\Range}{\mathfrak{R}}
\newcommand{\abs}[1]{\left\lvert#1\right\rvert}
\newcommand{\ek}[1]{\mathrm{e}_{#1}}
\newcommand{\Ztil}{\widetilde{Z}}
\newcommand{\ctil}{\widetilde{c}}
\newcommand{\set}[1]{\left\{#1\right\}}
\newcommand{\ZhatAlt}
\newcommand{\Curveloc}{\mathcal{C}_{\mathrm{loc}}}
\newcommand{\Curve}{\mathcal{C}}
\newcommand{\open}{\mathscr{G}}
\newcommand{\normX}[3]{\norm{#1}_{X^{#2,#3}}}
\newcommand{\pnorm}[2]{\norm{#1}_{L^{#2}(\T)}}
\newcommand{\nbhd}{\mathscr{N}}
\newcommand{\CA}{\mathscr{C}}
\DeclareMathOperator{\absD}{\abs{D_\al}}
\DeclareMathOperator{\bigo}{\mathcal{O}}
\DeclareMathOperator{\ind}{\mathrm{ind}}
\numberwithin{equation}{section}
\title{Global Bifurcation of Steady Surface Capillary Waves on a 2D Droplet}
\author{Gary Moon and Yilun Wu}
\date{February 2024}
\begin{document}

\maketitle

\begin{abstract}
    We construct global curves of rotational traveling wave solutions to the 2D water wave equations on a compact domain. The real analytic interface is subject to surface tension, while gravitational effects are ignored. In contrast to the rotational surface waves, the fluid follows the incompressible, irrotational Euler equations. This model can provide a description for tiny water droplets in breaking waves and white caps. The primary tool we use is global bifurcation theory, via a conformal formulation of the problem. The obtained fluid domains have $m$-fold discrete rotational symmetry, as well as a reflection symmetry.
\end{abstract}

\section{Introduction}


In this paper we construct global curves of solutions to the two-dimensional free boundary incompressible, irrotational Euler equations. As is well-known, this problem can be reduced to the following free surface Bernoulli equations (\cite{Lan1}). In this formulation, one looks for a time dependent fluid domain $\mathcal D(t)\subset \R^2$ bounded by a sufficiently smooth non-self-intersecting curve 
\begin{equation}\label{Bern 1}
    \mathcal S(t)=\{(x(s,t),y(s,t)):s\in \R\},
\end{equation}
and a sufficiently smooth function $\varphi(x,y,t)$ on $\overline{\mathcal D(t)}$ such that 
\begin{align}
    \Delta\varphi = 0 \quad &\text{ on }\mathcal D(t),\label{Bern 2}\\
    (x_t,y_t)\cdot n = \nabla \varphi \cdot n \quad &\text{ on }\mathcal S(t), \label{Bern 3}\\
    \varphi_t+\tfrac12|\nabla \varphi|^2 +gy -\sigma \kappa = Q\quad &\text{ on }\mathcal S(t).\label{Bern 4}
\end{align}
Here $n$ is the outward unit normal vector on $\mathcal S(t)=\partial \mathcal D(t)$, $\kappa$ the curvature on $\mathcal S(t)$, $g>0$ is the gravity constant, $\sigma>0$ is the capillarity constant, and $Q$ is the Bernoulli constant, which may be absorbed into $\varphi$ by adding a $t$-dependent function to it. There is a large literature devoted to the Cauchy problem of \eqref{Bern 1}-\eqref{Bern 4}. The current paper, however, is a contribution to the study of steady solutions to the same system, which by itself is an active area of research with a long history. To gain a better focus on the topic, we restrict ourselves mostly to discussing steady solutions in the following introduction.

The basic steady solutions studied in the literature are translational traveling waves. These are solutions to \eqref{Bern 1}-\eqref{Bern 4} of the form
\begin{align}
    (x(s,t),y(s,t))&=(\tilde x(s)-ct,\tilde y(s)),\\
    \varphi(x,y,t)&=\tilde\varphi(x-ct,y).
\end{align}
Equivalently, one looks for a domain $\tilde {\mathcal D}$ bounded by a non-self-intersecting curve
\begin{equation}
    \tilde {\mathcal S}=\{(\tilde x(s),\tilde y(s): s\in \R\}
\end{equation}
and a function $\tilde\varphi$ on $\overline {\tilde {\mathcal D}}$ such that
\begin{align}
    \Delta\tilde \varphi = 0 \quad &\text{ on }\tilde {\mathcal D},\\
     \nabla \tilde \varphi \cdot n=(-c,0)\cdot n \quad &\text{ on }\tilde{\mathcal S} , \\
    -c\tilde \varphi_x+\tfrac12|\nabla \tilde \varphi|^2 +gy -\sigma \kappa = Q\quad &\text{ on }\tilde {\mathcal S} .
\end{align}

The mathematical study of water waves goes back over two centuries to work of Laplace and Lagrange in the latter part of the eighteenth century. Some early work on traveling water waves involved studying the linearized (about still water) equations and was done by mathematicians such as Cauchy, Poisson and Airy. A quite remarkable early contribution was made by Gerstner, who discovered a family of exact steady solutions to the full nonlinear system. Another impressive advancement was that of Stokes when he utilized a perturbation expansion (in the wave amplitude) to study traveling water waves. Stokes also considered waves of large amplitude, having famously conjectured the existence of the wave of extreme form (or wave of greatest height), which has sharp crests of included angle $\frac{2\pi}{3}$ \cite{Sto1}.

The first rigorous construction of small-amplitude traveling waves was obtained by Nekrasov \cite{Nek1}. His approach was to use a sort of Stokes expansion and show that the series had a positive radius of convergence. Also notable regarding the work of Nekrasov is his formulation of the problem as an integral equation on the boundary of the fluid domain. This formulation has been quite influential and many studies have made use of his formalism. Another important contribution was made by Levi-Civita who used a similar approach to Nekrasov (expanding in powers of the amplitude and showing convergence), but formulated the problem differently \cite{Lev1}.

Up until this point, perturbative assumptions (i.e., small amplitude) were absolutely critical to the analysis. The first large-amplitude traveling water waves were not constructed until the middle of the twentieth century in the work of Krasovski\u{i} \cite{Kra1}. By utilizing the positive operator theory of Krasnoselskii, Krasovski\u{i} was able to construct a set of large-amplitude traveling waves.

A major milestone in the study of traveling water waves was the introduction of a general theory of bifurcation from a simple eigenvalue \cite{CrRa1} and global continuation (e.g., Dancer's theory of global bifurcation for analytic operators \cite{Dan1}), along with the realization that the problem of traveling water waves fit into this framework. Using such tools often allows one to not simply construct individual traveling wave solutions or even sets of solutions, but global connected sets (or even curves) of solutions.

Indeed, Keady-Norbury used this theory to construct the first global (smooth) curve of large-amplitude traveling water waves \cite{KeNo1}. It was through studying the limiting behavior of solutions along this curve that the famed Stokes conjecture on the wave of extreme form \cite{AFT1,Tol2} was resolved. Further constructions of global connected sets of (translational) traveling gravity water waves ($g>0$ and $\sig=0$) can be found, for example, in \cite{CoSt1, CSV1, AmTo1}. A good survey of the literature on traveling water waves can be found in \cite{HazEtAl1}.

Traveling waves in the presence of surface tension are quite a bit less well-understood than in the case of pure gravity waves. An important early contribution was made by Crapper, who derived a global family of exact solutions to the full nonlinear Euler equations describing traveling capillary waves ($g=0$ and $\sig>0$) over an infinitely-deep fluid layer \cite{Cra1}. These solutions, so-called the Crapper waves, are well known to form splash singularities in the troughs of the waves, as the continuation parameter approaches a limiting value. A similar construction was carried out in the case of finite depth by Kinnersley \cite{Kin1}.

Much like with traveling gravity waves or capillary waves, bifurcation analysis has been a powerful tool for constructing more general traveling waves subject to a combination of gravity, surface tension ($g>0$ and $\sigma>0)$, and other effects such as voriticity and surface wind. For example, for construction of small-amplitude waves, see \cite{CrNi1, Wah1} and the references therein. For large-amplitude waves and criteria of singularity formation in global solution sets, see \cite{Walsh1, CSV1} and included references.

The primary difference between our work and other works on the subject of traveling water waves is that we consider {\em rotational} waves on the surface of a capillary drop as opposed to translational waves on the surface of a fluid layer. More precisely, we look for $g=0$ solutions to \eqref{Bern 1}-\eqref{Bern 4} of the form 
\begin{align}
(x(\alpha,t),y(\alpha,t))&= R_{ct}(\tilde x(\alpha),\tilde y(\alpha)),\\
\varphi(x,y,t)&=\tilde\varphi(R_{-ct}(x,y)),
\end{align}
where $\alpha\in\R_{\text{mod } 2\pi}$, $R_\theta(x,y)=(x\cos\theta-y\sin\theta, x\sin\theta+y\cos\theta)$. Incidentally, the boundary curve is a simple closed curve bounding a compact domain (the droplet), rather than the typical case of an open curve with an unbounded fluid domain in the above mentioned literature. We also emphasize here that, although the surface waves make the boundary of the fluid domain rotate rigidly, the fluid flow itself is actually irrotational. In fact, the inside of the fluid must follow some complicated flow far from a rigid body rotation, to create the appearance of a rotational wave on the surface.

We can describe the rotational traveling wave problem more directly as follows. We look for a compact domain $\mathcal D$ bounded by a simple closed curve 
\begin{equation}\label{Bern rot 1}
    \mathcal S=\{(x(\alpha),y(\alpha)):\alpha\in \R_{\text{mod } 2\pi}\}
\end{equation}
and a function $\varphi$ on $\overline {\mathcal D}$ such that
\begin{align}
    \Delta \varphi &=0 \quad \text{ on }\mathcal D, \label{Bern rot 2}\\
    (c~(-y,x)- \nabla \varphi )\cdot n&=0 \quad \text{ on }\mathcal S, \label{Bern rot 3}\\
    -c~(-y,x)\cdot \nabla \varphi +\tfrac12|\nabla\varphi|^2-\sigma\kappa &= Q \quad \text{ on }\mathcal S. \label{Bern rot 4}
\end{align}

One of the earliest contributions to the study of the evolution of waves on the surface of a capillary drop was made by Lord Rayleigh. Indeed, in \cite{Ray1}, he computed small oscillations of a capillary drop near equilibrium. Beyer-G\"{u}nther used a coordinate-free geometric formulation of the system to demonstrate that the problem is well-posed in Sobolev spaces, the earliest result on the well-posedness of the water waves system in the presence of surface tension \cite{BeGu1}. In \cite{Dya1}, Dyachenko undertakes a numerical study of the motion of a capillary drop as a means to better understand the process of whitecapping (see also \cite{DyNe1}) and how a droplet might fracture into a spray. This work was continued in \cite{Dya2} and \cite{ChDy1}, where traveling waves on a capillary drop were numerically computed with an eye to studying the limiting behavior.

Though traveling waves on the surface of a capillary drop have been computed via numerical and perturbation analysis, their existence and behavior have not been rigorously demonstrated. It is our goal in this paper to initiate the rigorous study of this phenomenon by proving their existence. 
Indeed, our main theorem can be stated as follows. 
\begin{thm}
    \label{thm:InformalMainThm}
Fix $\sigma>0$, a H\"{o}lder coefficient $\be\in(0,1)$, a symmetry multiplicity $m\in \N$, $m\geq2$, and an index $k\in \N$. Then there exists a global curve of solutions $(\tilde x(s,\cdot),\tilde y(s,\cdot), \tilde c(s,\cdot))$, such that for each $s\in \R$, 
$(x(\alpha),y(\alpha))=(\tilde x(s,\alpha)$, $\tilde y(s,\alpha))$, $\alpha\in \R_{\text{mod } 2\pi}$ is a simple closed $C^{2,\beta}$ curve, and $c=\tilde c(s)$ a real number. They give a solution to \eqref{Bern rot 1}-\eqref{Bern rot 4}. i.e. on the domain $\mathcal D$ bounded by $\mathcal S=\{(x(\alpha),y(\alpha)):\alpha\in \R_{\text{mod } 2\pi}\}$, there exists a $\varphi\in C^{2,\beta}(\overline {\mathcal D})$ and $Q\in \R$ so that \eqref{Bern rot 1}-\eqref{Bern rot 4} hold. The domain $D$ satisfies an $m$-fold rotational symmetry and a reflection symmetry:
\begin{equation}
    \left(x\left(\alpha+\tfrac{2\pi}m\right),y\left(\alpha+\tfrac{2\pi}m\right)\right)=R_{\frac{2\pi}{m}}(x(\alpha),y(\alpha)),~(x(-\alpha), y(-\alpha))=(x(\alpha),-y(\alpha)).
\end{equation}
Furthermore, the curve $(\tilde x, \tilde y, \tilde c)$ satisfies the following:
\begin{equation}
\tilde c(0)=\pm\sqrt{ mk - \frac{1}{mk} },
\end{equation}
and either one of the following alternatives occurs:
\begin{enumerate}[label=(\alph*)]
\item $C^1$ norm blow-up:
\begin{equation}
    \lim_{|s|\to\infty}\big\|(\tilde x(s,\cdot),\tilde y(s,\cdot))\big\|_{[C^1(\R_{\text{mod } 2\pi})]^2}=\infty;
\end{equation}
\item chord-arc bound blow-up:
\begin{equation}
    \lim_{|s|\to\infty} \sup_{\alpha_1\ne\alpha_2}\frac{|\alpha_1-\alpha_2|_{\text{mod } 2\pi}}{|(\tilde x(s,\alpha_1),\tilde y(s,\alpha_1))-(\tilde x(s,\alpha_2),\tilde y(s,\alpha_2))|}=\infty;
\end{equation}
\item closed loop, i.e., there exists some $s_0>0$, such that for all $s\in \R$,
\begin{equation}
    (\tilde x(s+s_0,\cdot),\tilde y(s+s_0,\cdot), \tilde c(s+s_0))=(\tilde x(s,\cdot),\tilde y(s,\cdot), \tilde c(s)).
\end{equation}
\end{enumerate}
\end{thm}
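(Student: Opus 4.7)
My plan is to recast \eqref{Bern rot 1}--\eqref{Bern rot 4} as a single nonlinear equation on the unit circle via conformal mapping. I would parametrize an unknown fluid domain $\mathcal D$ by the Riemann map $Z:\overline{\disc}\to\overline{\mathcal D}$ normalized by $Z(0)=0$ and $Z'(0)>0$, and work with its boundary trace $z(\al):=Z(e^{i\al})$. The $m$-fold rotational and reflection symmetries translate into the linear conditions $z(\al+2\pi/m)=e^{2\pi i/m}z(\al)$ and $z(-\al)=\overline{z(\al)}$, which I impose at the function-space level by restricting $z$ to a closed subspace $\X\subset C^{2,\be}(\T;\C)$. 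The harmonicity of $\varphi$ and the kinematic condition \eqref{Bern rot 3} uniquely determine the boundary trace of the velocity potential (up to an additive real constant) by inverting a Hilbert-transform operator on $\partial\disc$; substituting into the dynamic condition \eqref{Bern rot 4} and subtracting the mean to absorb $Q$ yields a real-analytic map $F:U\times\R\to\Y$, where $U\subset\X$ is the open set of injective immersions with nonvanishing derivative, and $\Y\subset C^\be(\T;\R)$ is a symmetric mean-zero H\"older subspace. The circular family $z_0(\al)=e^{i\al}$ satisfies $F(z_0,c)=0$ for all $c\in\R$.

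\textbf{Local bifurcation.} The Fr\'echet derivative $D_z F(z_0,c)$ diagonalizes on the Fourier modes $e^{imk\al}$, $k\ge 1$, admitted by the symmetry. A direct Fourier calculation---with the linearized curvature contributing a factor $(mk)^2-1$ and the Bernoulli--kinematic pairing contributing $mk\cdot c^2-\sig$ after normalization---yields a simple zero eigenvalue precisely when $c^2=mk-1/(mk)$. At each $c_k:=\pm\sqrt{mk-1/(mk)}$ the one-dimensional kernel and the nonzero transversality condition of the Crandall--Rabinowitz theorem \cite{CrRa1} are verified by direct computation, producing a local real-analytic curve of nontrivial symmetric solutions through $(z_0,c_k)$ with $\tilde c(0)=c_k$.

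\textbf{Global continuation.} Since $F$ is real-analytic, I next invoke Dancer's analytic global bifurcation theorem \cite{Dan1}. The two main prerequisites are (i) $D_zF(z,c)$ is Fredholm of index zero at every solution, and (ii) closed bounded subsets of the solution set are precompact in $\X\times\R$. For (i), $D_zF$ has the structure of an elliptic second-order operator (from the curvature) plus lower-order nonlocal corrections (from the Hilbert transform), hence is Fredholm of index zero by standard pseudodifferential calculus. For (ii), the compact embedding $C^{2,\be}\hookrightarrow C^{2,\be'}$ ($\be'<\be$), combined with an elliptic/analytic bootstrap keeping solutions inside $U$, suffices. Dancer's theorem then furnishes a global real-analytic curve $(\tilde x(s,\cdot),\tilde y(s,\cdot),\tilde c(s))$ obeying the trichotomy: norm blow-up in $\X\times\R$; loss of compactness by exiting every closed bounded subset of $U\times\R$; or closed loop. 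By elliptic regularity and an a priori bound on $\tilde c$, the first reduces to alternative (a); the second must manifest as either $C^1$ blow-up (giving (a)) or chord-arc degeneration (giving (b)); the third is (c).

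\textbf{Main obstacle.} The subtlest step is translating the abstract Dancer alternatives into the precise geometric alternatives (a)--(b) of the statement. Concretely, I would need to show that any sequence of solutions with uniformly bounded $C^1$ norm and uniformly bounded chord-arc constant has Riemann-map traces $z$ that are precompact in $C^{2,\be}$, and that $|\tilde c(s)|$ remains bounded on bounded $s$-intervals. This will rest on Kellogg-type boundary regularity estimates for conformal maps adapted to the nonlinear setting, combined with a bootstrap argument exploiting the analyticity and ellipticity of $F$; the conformal parametrization can, in principle, degenerate even when the underlying curve is nice, and ruling this out is the crux. A secondary issue is verifying the Fredholm property along the entire curve, which ultimately rests on the principal symbol of $D_zF$ remaining elliptic throughout $U$.
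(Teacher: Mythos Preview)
Your proposal is correct and matches the paper's route almost exactly: conformal reformulation on the disc with the symmetries built into the function spaces, Crandall--Rabinowitz local bifurcation at $c^2=mk-1/(mk)$, Dancer analytic global continuation with Fredholm and local-compactness hypotheses, and sharpening of the abstract blow-up alternative to the geometric $C^1$/chord-arc dichotomy via a Frenet--Serret plus Kellogg--Warschawski bootstrap---precisely the ``Kellogg-type boundary regularity'' you anticipate as the crux. The only technical deviations are that the paper takes a holomorphic target space $X^{0,\beta}$ rather than your real mean-zero H\"older space and establishes the index-zero Fredholm property by an explicit invertible-principal-part-plus-compact-commutator decomposition rather than by invoking pseudodifferential calculus; your asserted a priori bound on $\tilde c$ is, incidentally, not proved in the paper either.
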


\begin{rmk}
    \label{rmk:InformalMainThm}
    \begin{enumerate}
        \item For $\alpha\in \R_{\text{mod } 2\pi}$, $|\alpha|_{\text{mod } 2\pi}=\displaystyle \min_{n\in \mathbb Z}|\alpha-2n\pi|$ is the distance to $0$ in $\R_{\text{mod } 2\pi}$.
        \item Theorem \ref{thm:InformalMainThm} is a result of global bifurcation theory. For a more precise version of Theorem \ref{thm:InformalMainThm} including the bifurcation statement, see Theorem \ref{thm:MainLocal} and Theorem \ref{thm:MainGlobal}. We use a conformal reformulation for the more technical version of the theorems. 
        \item For clarity, and to reduce notational clutter, we construct the curve of solutions with the fluid domain having $C^{2,\be}$ regularity. One can easily modify the proof to work with $C^{k,\be}$ regularity for $k>2$ instead. In fact, the constructed fluid domains all have real analytic regularity. See Section \ref{sec: ana} for more details. 
        \item A typical way $C^1$ norm blow-up can occur is for the boundary of fluid domains to form a non-smooth jump or ``fracture".
        \item A typical way chord-arc bound blow-up can occur is for the fluid domains to gradually corrugate and form air bubbles.
        \item In other models without surface tension (mainly with translational traveling waves), the loop alternative can often be eliminated via ``nodal analysis" and maximum principle type arguments. 
However, in the presence of surface tension, the higher number of derivatives from the curvature term destroys one's ability to exploit such arguments. As a result, the loop possibility is often simply retained (e.g., see \cite{Walsh1}). This has made it a difficult task to guarantee norm blow-up for limiting large-amplitude waves in the presence of surface tension.

The retention of the loop is not the only delicate issue that arises in the study of traveling water waves due to the presence of surface tension. This is particularly the case when one considers gravity-capillary waves ($g>0$ and $\sig>0$). Fixing $g>0$, there are certain values of $\sig>0$ which result in the linearized operator having a higher-dimensional null space (i.e., resonance). For such values of $(g,\sig)$, various kinds of bifurcations (and secondary bifurcations) can occur. This is the source of so-called Wilton ripples. For more on such phenomena, see \cite{ToJo1}.

        \item This theorem does not provide details describing a limiting shape showing how one of the above alternatives actually occurs, although some numerical computations hint at possible corrugation of fluid boundaries and formation of air bubbles. We leave this to future studies. 
    \end{enumerate}
\end{rmk}

Our approach employs the conformal formalism of \cite{Dya1}. Namely, we shall study (time-dependent) Riemann maps $z(w,t)$ from the unit disk $\disc$ into the fluid domain $\CD(t)$. Given our interest in traveling waves, we seek these conformal maps under the form
\[
z(w,t) = Z(w)e^{ict}
\]
for a fixed Riemann map $Z$ and wave speed $c$. Moreover, we assume our Riemann map $Z$ to verify certain symmetries. More particularly, we suppose that, for $m\geq2$ an integer,
\[
Z(e^{i\frac{2\pi}{m}}w) = e^{i\frac{2\pi}{m}}Z(w) \text{ and } \overline{Z(w)} = Z(\overline{w}).
\]
Observe that $m$ represents a symmetry multiplicity. For example, a traveling wave corresponding to the Riemann map $Z$ with $m=3$ will have 3 symmetric lobes.

We show, in Section 2, that such a (fixed) Riemann map $Z$ must be a solution to the equation
\begin{equation}
    \label{eqn:TravelingWaveEqn-Intro}
2\Cau\left(\frac{Z_\al}{\abs{Z_\al}}\right)_\al - i2Z_\al + c^2\Cau( Z\Hil( \abs{Z}^2 )_\al ) = 0.
\end{equation}
Note that the trivial solution given by $Z_0(w)=w$ is a solution to \eqref{eqn:TravelingWaveEqn-Intro}.

The remainder of the paper is dedicated to the proof of Theorem \ref{thm:InformalMainThm} and is organized as follows. In Section 2, we begin by writing down the relevant equations of motion and then using holomorphic coordinates/the conformal method to obtain a convenient formulation of the problem. Next, in Section 3, we carry out the necessary setup to apply the Dancer/Buffoni-Toland framework of analytic global bifurcation theory to the problem. In Section 4, we use this analytic bifurcation theory to construct local curves of solutions bifurcating from the trivial (circular) solution. In Section 5, we complete the proof of Theorem \ref{thm:InformalMainThm} by continuing the local curves constructed in Section 4 to global curves of solutions. Moreover, in Section \ref{sec: ana}, we show that the obtained fluid domains have real analytic boundaries.

\section{Holomorphic Coordinates}
In this section, we derive the reformulation of the problem in holomorphic coordinates, and prove a few lemmas along the way that will become useful later on. We consider a 2D unit density incompressible irrotational fluid on a simply connected moving domain $\mathcal D(t)$ with a sufficiently smooth free boundary subject to surface tension. We ignore gravity and other effects in the fluid and model it by the standard incompressible Euler equations. By a Galilean transformation, we may assume without loss of generality that the center of mass of $\mathcal D(t)$ is fixed at the origin. Let $z=x+iy$ be the complex coordinates in physical space. Let $\mathbb D$ be the unit disc centered at the origin with $\mathbb T=\partial \mathbb D$. Denote by $w=u+iv$ the complex coordinates on $\mathbb D$. Let $z=z(w,t)$ be a Riemann mapping from $\mathbb D$ to $\mathcal D(t)$ satisfying $z(0,t)=0$. It can be continuously extended to a diffeormorphism from $\overline{\mathbb D}$ to $\overline{\mathcal D(t)}$. Such a Riemann mapping is not unique, but the non-uniqueness is only up to a rotation on $\mathbb D$. It will be uniquely determined if one specifies $z(1,t)\in \partial\mathcal D(t)$. Let $\varphi(x,y,t)$ be a harmonic velocity potential on $\mathcal D(t)$. It is well known that the Euler equations on $\mathcal D(t)$ can be reduced to the kinematic boundary condition
\begin{equation}\label{original KBC}
    (x_t,y_t)\cdot n = \nabla \varphi \cdot n \quad \text{ on }\partial \mathcal D(t),
\end{equation}
and the dynamical boundary condition
\begin{equation}\label{original DBC}
    \varphi_t+\frac12|\nabla \varphi|^2+\sigma \kappa =0\quad \text{ on }\partial \mathcal D(t).
\end{equation}
Here $n$ is the unit normal vector and $\kappa$ is the curvature on $\partial\mathcal D(t)$. $\sigma$ is the capillarity constant. We can rewrite these equations in complex coordinates. Denote by 
\begin{equation}\label{def: phi}
    \phi(w,t)=\varphi(x(u,v,t),y(u,v,t),t)
\end{equation}
the pulled-back velocity potential on $\mathbb D$, and let $\theta(w,t)$ be a conjugate harmonic function of $\phi$ on $\mathbb D$. $\theta$ is the pulled-back stream function. Denote by $\Phi=\phi+i\theta$ the pulled-back holomorphic potential on $\mathbb D$. Note that by construction and by symmetry of \eqref{original KBC} and \eqref{original DBC}, we can add $c_1+ic_2(t)$ to $\Phi$ and still get a solution to the problem. Here $c_1$ is an arbitrary real constant, and $c_2(t)$ is an arbitrary real-valued function. Let $\alpha$ be the polar angular variable on $\mathbb D$. Note that the $\alpha$-derivative of any holomorphic function $f(w)$ on $D$ is given by
\begin{equation}\label{d_al}
    f_\alpha(w)=-yf_x+xf_y=iwf'(w).
\end{equation}
Noting $(a,b)\cdot (c,d)=\Rea [(a+ib)\overline{(c+id)}]$, \eqref{original KBC} can be rewritten as  
\begin{equation}\label{KBC 1}
    \Rea(z_t \overline{iz_\alpha})=\Rea\left(\overline{\left(\frac{\Phi'}{z'}\right)}\overline{iz_\alpha}\right)\quad \text{ on }\mathbb T.
\end{equation}
Here we denoted the $w$-derivatives by $'$ and used 
\begin{equation}\label{Phi'}
    \Phi'=\Phi_z z' = (\varphi_x-i\varphi_y)z'.
\end{equation}
By \eqref{d_al}, we obtain from \eqref{KBC 1} 
\begin{equation}\label{KBC on T}
    \Ima(z_t\overline{z_\alpha})=\Ima(\overline{\Phi_\alpha})\quad \text{ on }\mathbb T.
\end{equation}
From \eqref{def: phi} and \eqref{Phi'} we get
\begin{align}
    \phi_t&=\varphi_x x_t+\varphi_y y_t +\varphi_t \notag\\
    &= \varphi_t +\Rea [(x_t+iy_t)(\varphi_x-i\varphi_y)]\notag\\
    &=\varphi_t +\Rea\left(z_t\frac{\Phi'}{z'}\right).\label{varphi_t}
\end{align}
On the other hand, we have
\begin{align}
    \kappa = \frac{x_\alpha y_{\alpha\alpha}-y_\alpha x_{\alpha\alpha}}{|z_\alpha|^3} &=\frac{\Ima (\overline{z_\alpha }z_{\alpha\alpha})}{|z_\alpha|^3}\notag\\
    &=\frac{z_{\alpha\alpha}\overline{z_\alpha}-\overline{z_{\alpha\alpha}}z_\alpha}{2i|z_\alpha|^3}\notag\\
    &=\frac{1}{iz_\alpha}\left(\frac{z_\alpha}{|z_\alpha|}\right)_\alpha.\label{kappa 1}
\end{align}
By \eqref{Phi'}, \eqref{varphi_t} and \eqref{kappa 1}, \eqref{original DBC} can be written as 
\begin{equation}\label{DBC 1}
    \Rea \Phi_t - \Rea\left(z_t\frac{\Phi'}{z'}\right)+\frac12\left|\frac{\Phi'}{z'}\right|^2+\frac{\sigma}{iz_\alpha}\left(\frac{z_\alpha}{|z_\alpha|}\right)_\alpha=0\quad \text{ on }\mathbb T.
\end{equation}
We can further simplify the middle two terms in \eqref{DBC 1} as follows. Using \eqref{d_al} and \eqref{KBC on T}, we have
\begin{align}
    &~\frac12\left|\frac{\Phi'}{z'}\right|^2- \Rea\left(z_t\frac{\Phi'}{z'}\right)\notag\\
    =&~ \frac12\left|\frac{\Phi_\alpha}{z_\alpha}\right|^2- \Rea\left(z_t\frac{\Phi_\alpha}{z_\alpha}\right)\notag\\
    =&~\frac1{2|z_\alpha|^2}\Rea(|\Phi_\alpha|^2-2\Phi_\alpha z_t\overline{z_\alpha})\notag\\
    =&~\frac1{2|z_\alpha|^2}[\Phi_\alpha \overline{\Phi_\alpha}-\Phi_\alpha z_t\overline{z_\alpha}-\overline{\Phi_\alpha}\overline{z_t}z_\alpha]\notag\\
    =&~\frac1{2|z_\alpha|^2}[(\overline{\Phi_\alpha})^2-\overline{\Phi_\alpha}2i\Ima(\overline{\Phi_\alpha}) -\Phi_\alpha z_t\overline{z_\alpha}-\overline{\Phi_\alpha}\overline{z_t}z_\alpha]\notag\\
    =&~\frac1{2|z_\alpha|^2}[(\overline{\Phi_\alpha})^2-\overline{\Phi_\alpha}(\overline{z_t}z_\alpha+2i\Ima(z_t\overline{z_\alpha})) -\Phi_\alpha z_t\overline{z_\alpha}]\notag\\
    =&~\frac1{2|z_\alpha|^2}[(\overline{\Phi_\alpha})^2-(\overline{\Phi_\alpha}+\Phi_\alpha) z_t\overline{z_\alpha}]\notag\\
    =&~\frac1{2z_\alpha}\left[\frac{(\overline{\Phi_\alpha})^2}{\overline{z_\alpha}}-2z_t\Rea \Phi_\alpha\right].
\end{align}
Thus \eqref{DBC 1} can be written as 
\begin{equation}\label{DBC 2}
    \frac{1}{z_\alpha}\left[z_\alpha \Rea \Phi_t-z_t\Rea \Phi_\alpha-i\sigma\left(\frac{z_\alpha}{|z_\alpha|}\right)_\alpha+\frac{(\overline{\Phi_\alpha})^2}{2\overline{z_\alpha}}\right]=0\quad \text{ on }\mathbb T.
\end{equation}
To further simplify the equation, we introduce the Cauchy integral operator. For $f\in C^{0,\be}(\mathbb T)$ with $0<\be<1$, we define its Cauchy integral by
\begin{equation}
    \Cau f(w)=\frac1{2\pi i}\int_\T \frac{f(\tau)}{\tau-w}~d\tau,
\end{equation}
and denote its average by 
\begin{equation}
    \underset{\T}{\text{Avg}}f = \frac1{2\pi}\int_0^{2\pi}f(e^{i\alpha})~d\alpha = \frac{1}{2\pi i}\int_\T \frac{f(\tau)}{\tau}~d\tau .
\end{equation}
For simplicity, we sometimes also write $\Cau f$ instead of $\Cau(f\big|_\T)$ for $f$ defined on $\overline{\mathbb D}$. Note that $\Cau f$ is a holomorphic function on $\mathbb D$ with continuous boundary values in $C^{0,\be}(\T)$. In fact, for any $\tau_0=e^{i\theta_0}\in\T$, we have the following Plemelj formula:
\begin{equation}\label{Plemelj original}
    \lim_{\substack{w\to \tau_0 \\ w\in \mathbb D}}\Cau f(w)=\frac12f(\tau_0)+\frac1{2\pi i}\int_\T \frac{f(\tau)}{\tau-\tau_0}~d\tau=\frac12\underset{\T}{\text{Avg}}f +\frac12f(\tau_0)+\frac i2\mathfrak H f(\tau_0).
\end{equation} 
Here 
\begin{equation}
    \mathfrak H f(e^{i\theta_0})=\frac1{2\pi}\int_0^{2\pi}f(e^{i\theta}) \cot\frac{\theta_0-\theta}{2}~d\theta
\end{equation}
is the Hilbert transform on the circle. The singular integrals in the above formulas are all interpreted as Cauchy principle values. In the following, we often abuse notation and regard $\Cau f$ as defined on $\overline{\mathbb D}$. Note that $\Hil f$, however, is a function on $\T$. Thus we may write \eqref{Plemelj original} as
\begin{equation}\label{Plemelj}
    (\Cau f)\big|_\T = \frac12\underset{\T}{\text{Avg}}f +\frac12f+\frac i2\mathfrak H f.
\end{equation}
Similar to the convention for Cauchy integral, we sometimes write $\Hil f$ instead of $\Hil (f\big|_\T)$ for a function $f$ defined on $\overline{\mathbb D}$. The Cauchy integral formula and the Plemelj formula imply the following Titchmarsh type theorem.
\begin{lemma}\label{lem: Titchmarsh}
    Let $f$ be a holomorphic function on $\mathbb D$ with continuous boundary values in $C^{0,\beta}(\T)$ for some $0<\beta<1$. Then
    \begin{enumerate}
        \item $\Cau (f\big|_\T) = f$.
        \item $\Cau (\overline{f\big|_\T})=\overline{\underset{\T}{\emph{Avg}}f}=\overline{f(0)}$.
        \item $\Cau(\Rea f\big|_\T) = \frac12f + \frac12\overline{\underset{\T}{\emph{Avg}}f}=\frac12f+\frac12\overline{f(0)}$.
        \item $\Cau(i\Ima f\big|_\T) = \frac12 f-\frac12\overline{\underset{\T}{\emph{Avg}}f}=\frac12f-\frac12\overline{f(0)}$.
    \end{enumerate}
\end{lemma}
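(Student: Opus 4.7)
The plan is to prove the four identities using Cauchy's integral formula, an antiholomorphic reflection argument, and linearity.

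Identity (1) is a direct application of Cauchy's integral formula: for $w \in \mathbb{D}$,
\[
\Cau(f|_\T)(w) = \frac{1}{2\pi i}\int_\T \frac{f(\tau)}{\tau - w}\,d\tau = f(w).
\]
Both sides extend continuously to $\overline{\mathbb{D}}$ (the left side by the Plemelj formula \eqref{Plemelj original}, the right side by hypothesis), so the equality holds on the closed disk.

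For identity (2), I will use reflection across the unit circle. Define $h(w) = \overline{f(1/\bar{w})}$ for $|w| \geq 1$; a quick check via the Cauchy--Riemann equations shows that $h$ is holomorphic on $\{|w| > 1\}$, continuous on $\{|w| \geq 1\}$, agrees with $\overline{f|_\T}$ on $\T$, and satisfies $h(w) \to \overline{f(0)}$ as $|w| \to \infty$. For any fixed $w \in \mathbb{D}$, the integrand $h(\tau)/(\tau - w)$ is holomorphic in $\tau$ on the annulus $\{1 < |\tau| < R\}$ for every $R > 1$. Applying Cauchy's theorem on this annulus and letting $R \to \infty$ (the outer circular integral contributes $h(\infty) = \overline{f(0)}$ in the limit) yields
\[
\Cau(\overline{f|_\T})(w) = \overline{f(0)}, \qquad w \in \mathbb{D},
\]
and the identity extends to $\overline{\mathbb{D}}$ by continuity of the Cauchy integral. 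The one technical point is that the contour argument ostensibly requires $h$ to be holomorphic in an open neighborhood of $\{|w| \geq 1\}$; this is resolved by first approximating $f$ by $f_r(w) = f(rw)$ for $r \uparrow 1$, which is holomorphic in a disk strictly larger than $\overline{\mathbb{D}}$, and passing to the limit using the uniform convergence of $f_r$ to $f$ on $\T$ together with the continuity of $\Cau$ on $C^{0,\beta}(\T)$.

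Identities (3) and (4) then follow by linearity. Writing $\Rea f = \tfrac{1}{2}(f + \overline{f})$ and $i\Ima f = \tfrac{1}{2}(f - \overline{f})$, one immediately obtains
\[
\Cau(\Rea f|_\T) = \tfrac{1}{2}\Cau(f|_\T) + \tfrac{1}{2}\Cau(\overline{f|_\T}) = \tfrac{1}{2}f + \tfrac{1}{2}\overline{f(0)},
\]
and the analogous decomposition with a minus sign gives (4). The only mild obstacle in the whole proof is the justification of the reflection argument in (2) under the minimal regularity hypothesis, but this is a routine approximation step and presents no serious difficulty.
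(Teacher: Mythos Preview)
Your proof is correct, but the route differs from the paper's in two places. For (2), the paper avoids the reflection/annulus argument entirely: it takes the complex conjugate of $\Cau(\bar f)(w)$, uses $\bar\tau = 1/\tau$ on $\T$ to rewrite the integral as $\frac{1}{2\pi i}\int_\T \frac{f(\tau)}{\tau(1-\bar w\tau)}\,d\tau$, and then observes that this equals $\underset{\T}{\mathrm{Avg}}\,f$ by the Cauchy integral formula (since $1/(1-\bar w\tau)$ is holomorphic in the disk). This is a one-line computation and sidesteps the approximation issue you flag; incidentally, for that step uniform convergence of $f_r\to f$ on $\T$ already suffices to pass to the limit in $\Cau$ at interior points, so the $C^{0,\beta}$ continuity is not really needed (and $f_r\to f$ in $C^{0,\beta}$ need not hold without a slight loss of exponent). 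For (3) and (4), your linearity argument is cleaner than the paper's: the paper instead shows via the Plemelj formula that $\Cau(\Rea f)-\tfrac12 f$ has constant real part on $\T$, hence is constant, and then identifies the constant by averaging. Your derivation of (3) and (4) directly from (1) and (2) is the more economical path.
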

\begin{proof}
    The first equation is just the Cauchy integral formula. To get the second equation, we compute for all $w\in \mathbb D$
    \begin{align}
        \overline{(\Cau \overline{f})(w)} &= -\frac1{2\pi i}\int_\T \frac{f(\tau)}{\overline \tau-\overline w}~d\overline\tau\notag\\
        &=-\frac1{2\pi i}\int_\T\frac{f(\tau)}{\frac{1}{\tau}-\overline w}~d\left(\frac{1}{\tau}\right)\notag\\
        &=\frac1{2\pi i}\int_\T\frac{f(\tau)}{\tau(1-\overline w \tau)}~d\tau\notag\\
        &=\frac1{2\pi i }\int_\T \frac{f(\tau)}{\tau}~d\tau\notag\\
        &=\underset{\T}{\text{Avg}}f.
    \end{align}
    For the third equation, we note that $\Cau(\Rea f)-\frac12 f$ is holomorphic on $\mathbb D$, with the following continuous boundary value on $\T$:
    \begin{equation}
        \frac12\underset{\T}{\text{Avg}}(\Rea f )+ \frac12 \Rea f+\frac{i}2\Hil(\Rea f)-\frac12 f=\frac12\underset{\T}{\text{Avg}}(\Rea f )+\frac{i}{2}[\Hil(\Rea f)-\Ima f].
    \end{equation}
    Note that the above boundary value has constant real part. Thus $\Cau(\Rea f)-\frac12 f$ is constant on $\overline{\mathbb D}$. Hence
    \begin{align}
        \Cau(\Rea f)-\frac12 f &= \underset{\T}{\text{Avg}}\left(\Cau(\Rea f)-\frac12 f \right)\notag\\
        &=[\Cau(\Rea f)](0)-\frac12\underset{\T}{\text{Avg}} f\notag\\
        &=\underset{\T}{\text{Avg}}(\Rea f)-\frac12\underset{\T}{\text{Avg}} f\notag\\
        &=\frac12\overline{\underset{\T}{\text{Avg}}f}.
    \end{align}
    The last equation can be proven similarly.
\end{proof}
\begin{rmk}
A more common form of the Titchmarsh theorem states that for $f$ holomorphic on $\mathbb D$ with continuous boundary values in $C^{0,\be}(\T)$, $0<\be<1$,
\begin{equation}
    \Ima f\big|_\T=\mathfrak H(\Rea f\big|_\T)+\underset{\T}{\emph{Avg}}(\Ima f).
\end{equation}
This obviously holds in view of the proof of Lemma \ref{lem: Titchmarsh}.
\end{rmk}

For equivalence of the dynamical boundary condition before and after application of $\Cau$, we also note the following.
\begin{lemma}\label{lem: vanishing}
    Let $f$ be a holomorphic function on $\mathbb D$ with continuous boundary values in $C^{0,\beta}(\T)$ for some $0<\beta<1$, and let $r\in C^{0,\beta}(\T)$ be real-valued. Suppose $f$ is not identically zero. Then 
    $r=0$ if and only if $\Cau(rf\big|_\T)=0$.
\end{lemma}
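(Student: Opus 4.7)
The ``only if'' direction is immediate, since $r\equiv 0$ forces $rf|_\T = 0$ and hence $\Cau(rf|_\T) = 0$. For the ``if'' direction, the plan is to produce a holomorphic function on $\mathbb D$ whose boundary value is $r\bar f$ and then force it to vanish identically via a real-boundary-value argument.

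First, adding the Plemelj expansion \eqref{Plemelj} for $\Cau u|_\T$ to the complex conjugate of the same expansion applied to $\bar u$, and using that the Hilbert transform commutes with complex conjugation, one obtains the identity
\begin{equation}
    (\Cau u + \overline{\Cau \bar u})|_\T = u + \underset{\T}{\text{Avg}}\,u, \quad u \in C^{0,\be}(\T). \notag
\end{equation}
Applying this with $u = rf|_\T$ (so $\bar u = r\bar f$, since $r$ is real) under the hypothesis $\Cau(rf) = 0$, I get $\Cau(r\bar f)|_\T = r\bar f + \overline{\underset{\T}{\text{Avg}}(rf)}$. Consequently
\begin{equation}
    h(w) := \Cau(r\bar f)(w) - \overline{\underset{\T}{\text{Avg}}(rf)} \notag
\end{equation}
is holomorphic on $\mathbb D$, continuous on $\overline{\mathbb D}$, satisfies $h|_\T = r\bar f$, and, using $\Cau(r\bar f)(0) = \underset{\T}{\text{Avg}}(r\bar f) = \overline{\underset{\T}{\text{Avg}}(rf)}$, also satisfies $h(0) = 0$.

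Next I examine the product $hf$: it is holomorphic on $\mathbb D$, continuous on $\overline{\mathbb D}$, vanishes at $0$, and has boundary values $(hf)|_\T = r\bar f \cdot f = r|f|^2$, which are real. Therefore $\Ima(hf)$ is harmonic on $\mathbb D$, continuous on $\overline{\mathbb D}$, and zero on $\T$, so by the maximum principle $\Ima(hf)\equiv 0$. A holomorphic function with vanishing imaginary part is constant, and this constant is forced to be $h(0)f(0) = 0$, so $hf \equiv 0$ on $\mathbb D$. Since $f \not\equiv 0$ by hypothesis, the identity theorem gives $h \equiv 0$ on $\mathbb D$, and in particular $r\bar f = 0$ on $\T$.

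To conclude $r \equiv 0$, I note that $\{f=0\}\cap\T$ cannot contain an open arc: otherwise Schwarz reflection across that arc would extend $f$ to a function holomorphic on a connected open set and vanishing on a set with accumulation points, forcing $f\equiv 0$ and contradicting the hypothesis. Hence $\{f\neq 0\}\cap\T$ is dense in $\T$, and on this dense set $r\bar f = 0$ gives $r=0$; continuity of $r$ then yields $r\equiv 0$. The only step that requires real care is the construction of the auxiliary $h$ with the correct boundary value and $h(0)=0$; once $h$ is in hand, every remaining step is a routine appeal to the maximum principle, the identity theorem, and Schwarz reflection.
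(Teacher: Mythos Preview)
Your proof is correct and follows essentially the same strategy as the paper: construct a holomorphic $h$ on $\mathbb D$ with $h|_\T = r\bar f$ and $h(0)=0$, observe that $fh$ has real boundary values and hence vanishes identically, deduce $h\equiv 0$, and finish with Schwarz reflection. The only differences are cosmetic: the paper builds $h$ via the Fourier series of $rf|_\T$ (killing nonnegative modes) rather than via your Plemelj identity, and it invokes the Titchmarsh-type Lemma~\ref{lem: Titchmarsh} to get $fh\equiv 0$ rather than appealing to the maximum principle directly.
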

\begin{proof}
The forward implication is obvious. Now suppose $\Cau(rf\big|_\T)=0$. The Fourier series of $rf\big|_\T$ converges on $L^2(\T)$:
\begin{equation}
    rf\big|_\T(\tau) = \sum_{n=-\infty}^\infty a_n \tau^n.
\end{equation}
Since $\Cau (\tau^n)=w^n$ if $n\ge 0$ and $\Cau (\tau^n)=0$ if $n<0$, we obtain that 
\begin{equation}
    \Cau(rf\big|_\T)(w)=\sum_{n=0}^\infty a_n w^n.
\end{equation}
Since  $\Cau(rf\big|_\T)=0$, we get $a_n=0$ for all $n\ge 0$, and $rf = \overline{h}$ on $\T$, where $h=\displaystyle\sum_{n=1}^{\infty}\overline{a_{-n}}w^n$ is holomorphic on $\mathbb D$ with continuous boundary value and $h(0) =0$. We take the complex conjugate to get $r\overline f = h$ on $\T$. It follows that $\overline {fh}=r|f|^2=fh$ on $\T$. It follows from Lemma \ref{lem: Titchmarsh} that 
\begin{equation}
    fh=\Cau (fh) = \Cau (\overline{fh})=\overline{(fh)(0)}=0.
\end{equation}
Since $f$ is holomorphic and not identically zero, $f$ cannot be zero on any open subset of $\mathbb D$. Hence $h=0$ on a dense subset of $\mathbb D$ and thus on $\overline{\mathbb D}$. It follows that $rf=0$ on $\T$. By the Schwarz reflection principle, $f$ cannot be zero on any open subset of $\T$. Thus $r=0$ on $\T$.
\end{proof}
The Cauchy integral commutes with angular rotations. In particular, we have
\begin{lemma}\label{lem: d_a Cau}
    Let $f\in C^{1,\beta}(\T)$ for some $0<\beta<1$, then
    \begin{equation}
        \Cau (f_\alpha)=(\Cau f)_\alpha.
    \end{equation}
\end{lemma}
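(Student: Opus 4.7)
The plan is to verify the identity first pointwise in the open disk $\mathbb D$ via a direct computation, and then pass to the boundary by continuity. For $w\in\mathbb D$, the Cauchy integral $\Cau f$ is holomorphic, so identity \eqref{d_al} applies to it and gives
\[
(\Cau f)_\al(w) = iw\,(\Cau f)'(w) = \frac{iw}{2\pi i}\int_\T \frac{f(\tau)}{(\tau-w)^2}\,d\tau.
\]
On the other hand, parametrizing $\tau=e^{i\al}$ so that $d\tau/(2\pi i) = e^{i\al}\,d\al/(2\pi)$, I would write
\[
\Cau(f_\al)(w) = \frac{1}{2\pi}\int_0^{2\pi}\frac{e^{i\al}\,f_\al(e^{i\al})}{e^{i\al}-w}\,d\al
\]
and integrate by parts in $\al$. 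Since $f\in C^{1,\be}(\T)$, the map $\al\mapsto f(e^{i\al})$ is $C^1$ and $2\pi$-periodic, so the boundary term vanishes. A short calculation gives $\frac{d}{d\al}\bigl[e^{i\al}/(e^{i\al}-w)\bigr]=-iwe^{i\al}/(e^{i\al}-w)^2$, and the integration by parts yields exactly the same expression as above for $(\Cau f)_\al(w)$. Hence the desired identity holds on all of $\mathbb D$.

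To extend the identity to $\overline{\mathbb D}$ (and in particular to $\T$), I would invoke the standard boundary regularity of Cauchy integrals: since $f\in C^{1,\be}(\T)$, the derivative $f_\al\in C^{0,\be}(\T)$, and the Plemelj formula \eqref{Plemelj} shows $\Cau(f_\al)$ extends continuously from $\mathbb D$ up to $\T$. Similarly, the Cauchy integral of a $C^{1,\be}$ function is of class $C^{1,\be}$ up to the boundary, so $(\Cau f)_\al$ also admits a continuous extension to $\overline{\mathbb D}$. Taking limits of the interior identity from inside $\mathbb D$ to $\T$ then gives the equality as functions in $C^{0,\be}(\T)$.

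The only nontrivial ingredient here is the $C^{1,\be}$ up-to-the-boundary regularity of $\Cau f$, which is a classical Plemelj--Privalov type statement for Cauchy integrals with Hölder data; everything else is a one-line integration by parts. I would expect this regularity fact to be cited rather than reproved, since the paper is already invoking Plemelj-style results freely.
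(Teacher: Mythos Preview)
Your proof is correct. The paper's own argument is even terser: it simply observes that the Cauchy integral commutes with angular rotations (i.e., $\Cau\bigl(f(e^{i\theta}\,\cdot)\bigr)(w)=(\Cau f)(e^{i\theta}w)$ by a change of variables), and differentiating this identity in $\theta$ at $\theta=0$ yields $\Cau(f_\al)=(\Cau f)_\al$ on $\mathbb D$. Your integration-by-parts computation is precisely the explicit unpacking of that differentiation, so the two arguments are essentially the same at different levels of abstraction. The paper, like you, reduces to checking the identity on $\mathbb D$ and leaves the passage to $\T$ implicit; your discussion of the $C^{1,\be}$ boundary regularity of $\Cau f$ makes that step more transparent.
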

\begin{proof}
    We only need to check that the two sides are equal at any $w\in \mathbb D$, in which case it is obvious as the Cauchy integral commutes with angular rotations.
\end{proof}
\begin{rmk}
    By \eqref{Plemelj}, we also have $\Hil(f_\alpha)=(\Hil f)_\alpha$. In the following, we sometimes write $\Cau f_\alpha$ and $\Hil f_\alpha$ without distinguishing the order of operations. 
\end{rmk}

We now multiply \eqref{DBC 2} by $z_\alpha$, apply $\Cau$ and use Lemma \ref{lem: Titchmarsh} to get
\begin{equation}\label{DBC 3}
     \Cau\left(z_\alpha \Rea \Phi_t-z_t\Rea \Phi_\alpha-i\sigma\left(\frac{z_\alpha}{|z_\alpha|}\right)_\alpha\right)=0.
\end{equation}
In particular, we have computed 
\begin{align}
    \underset{\T}{\text{Avg}}\frac{(\Phi_\alpha)^2}{z_\alpha}&=\underset{\T}{\text{Avg}}\frac{(iw\Phi')^2}{iwz'}\notag\\
    &=\frac1{2\pi i}\int_\T \frac{i\tau^2(\Phi')^2}{\tau z'}~\frac{d\tau}\tau\notag\\
    &=\frac1{2\pi}\int_\T\frac{(\Phi')^2}{z'}~d\tau\notag\\
    &=0.
\end{align}
For the last step, we used the fact that $z$ is a Riemann mapping and $z'$ has no zeros on $\overline {\mathbb D}$. By Lemma \ref{lem: vanishing}, \eqref{DBC 3} is equivalent to \eqref{DBC 2}. So the problem has been reduced to finding two holomorphic functions $z(w,t)$ and $\Phi(w,t)$ on $\mathbb D$ with sufficiently regular boundary values satisfying \eqref{KBC on T} and \eqref{DBC 3}.

We now look for steady traveling wave solutions. Those are solutions for which $z$ is of the form 
\begin{equation}\label{z ansatz}
    z(w,t)=Z(w)e^{ict}
\end{equation} for some fixed Riemann mapping $Z$ and $c\in \R$. Under this Ansatz for $z$, \eqref{KBC on T} reads
\begin{equation}
    \Ima (icZ\overline{Z_\alpha})=\Ima(\overline{\Phi_\alpha})\quad \text{ on }\T,
\end{equation}
or
\begin{equation}
    i\Ima (\Phi_\alpha)=-ic\Rea (Z\overline{Z_\alpha})=-\frac {ic}2(|Z|^2)_\alpha\quad \text{ on }\T.
\end{equation}
By Lemma \ref{lem: Titchmarsh} and Lemma \ref{lem: d_a Cau} we get 
\begin{equation}\label{Phi_alpha 1}
    \frac12\Phi_\alpha = -\frac{ic}2\Cau(|Z|^2 )_\alpha.
\end{equation}
It follows that
\begin{equation}\label{Phi form 1}
    \Phi(w,t) = -ic \Cau(|Z|^2)(w)+b(t).
\end{equation}
Recall that we have the freedom to add an arbitrary function of the form $c_1+ic_2(t)$ to $\Phi$. Thus we may assume without loss of generality that $b(t)$ is real-valued and $b(0)=0$.
We also have from \eqref{Phi_alpha 1} and \eqref{Plemelj} that
\begin{equation}\label{Phi form 2}
    \Phi_\alpha\big|_\T = \frac c2 \mathfrak H(|Z|^2)_\alpha-\frac{ic}2(|Z|^2)_\alpha\big|_\T.
\end{equation}
Using \eqref{z ansatz}, \eqref{Phi form 1}, \eqref{Phi form 2} with Lemma \ref{lem: Titchmarsh}, we can write \eqref{DBC 3} as 
\begin{equation}\label{DBC 4}
    b'(t)Z_\alpha - \frac{ic^2}2\Cau(Z\Hil(|Z|^2)_\alpha)-i\sigma\Cau\left(\frac{Z_\alpha}{|Z_\alpha|}\right)_\alpha=0.
\end{equation}
Any solution of \eqref{DBC 4} forces $b'(t)$ to be a real constant, which we denote by $-b$. We thus have 
\begin{equation}\label{Phi solution}
    \Phi(w,t)=-ic \Cau(|Z|^2)(w)-bt,
\end{equation}
and
\begin{equation}\label{DBC holo}
    -2ibZ_\alpha+c^2\Cau(Z\Hil(|Z|^2)_\alpha)+2\sigma\Cau\left(\frac{Z_\alpha}{|Z_\alpha|}\right)_\alpha=0.
\end{equation}
We compare \eqref{DBC holo} with (49) in \cite{Dya1}. They are the same equation once we note that $\alpha$ corresponds to $-u$ in their paper. 

As is shown above, \eqref{DBC holo} is a consequence of the real equation \eqref{DBC 2}. 
Later it will be useful to go back to this real equation on $\T$. We record this important step as follows.
\begin{lemma}\label{lemma:ComplexToReal}
    Let $Z$ be a Riemann mapping satisfying $Z(0)=0$ and extending to a $C^{2,\beta}$ diffeomorphism on $\overline{\mathbb D}$. If $Z$ solves \eqref{DBC holo}, then 
    \begin{equation}\label{aug main eq}
        -2ibZ_\alpha+c^2 Z\Hil(|Z|^2)_\alpha+2\sigma\left(\frac{Z_\alpha}{|Z_\alpha|}\right)_\alpha-\frac{ic^2}{4}\frac{[\overline{\Cau(|Z|^2)_\alpha}]^2}{\overline{Z_\alpha}}=0 
    \end{equation}
    on $\T$.
\end{lemma}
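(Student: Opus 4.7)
The plan is to recognize Lemma \ref{lemma:ComplexToReal} as the traveling-wave specialization of the equivalence \eqref{DBC 2} $\Leftrightarrow$ \eqref{DBC 3} that was already established in the derivation above via Lemma \ref{lem: vanishing}. Under the ansatz \eqref{z ansatz} together with \eqref{Phi solution}, equation \eqref{DBC 3} becomes exactly \eqref{DBC holo}. My claim is that, under the same ansatz, the bracketed expression inside \eqref{DBC 2} becomes, up to a nonzero multiplicative constant and an overall factor of $e^{ict}$, the LHS of \eqref{aug main eq}. Once that identification is made, the implication \eqref{DBC holo} $\Rightarrow$ \eqref{aug main eq} is immediate from the already-proved equivalence.

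The identification is a direct substitution that I would carry out as follows. Using $\Rea\Phi_t = -b$ (from \eqref{Phi solution}) and the boundary formula \eqref{Phi form 2} for $\Phi_\al|_\T$, the three terms $z_\al\Rea\Phi_t$, $-z_t\Rea\Phi_\al$, $-i\sigma(z_\al/|z_\al|)_\al$ in the bracket of \eqref{DBC 2} rewrite routinely to $-bZ_\al e^{ict}$, $-\tfrac{ic^2}{2}Z\,\Hil(|Z|^2)_\al\,e^{ict}$, and $-i\sigma(Z_\al/|Z_\al|)_\al\,e^{ict}$, respectively. The one point demanding care is the quadratic term, where the key observation is the boundary identity
\[
\overline{\Phi_\al}\big|_\T \;=\; ic\,\overline{\Cau(|Z|^2)_\al},
\]
which follows immediately from $\Phi|_\T = -ic\,\Cau(|Z|^2)|_\T - bt$ together with Plemelj. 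This yields $\frac{(\overline{\Phi_\al})^2}{2\overline{z_\al}} = -\tfrac{c^2}{2}\,\tfrac{[\overline{\Cau(|Z|^2)_\al}]^2}{\overline{Z_\al}}\,e^{ict}$ (using $\overline{z_\al} = \overline{Z_\al}e^{-ict}$). Collecting all four terms, pulling out the common factor of $e^{ict}$, and multiplying through by $2i$, one recovers the LHS of \eqref{aug main eq}.

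There is no genuine obstacle: the analytic content, Lemma \ref{lem: vanishing}, has already been deployed in the derivation preceding the lemma (to pass between \eqref{DBC 2} and \eqref{DBC 3}), so all that remains is bookkeeping. The structure ``$z_\al \times$ (real)'' of the bracket is preserved by the ansatz because $z_\al = Z_\al e^{ict}$ is still holomorphic and nowhere zero on $\T$, which is what Lemma \ref{lem: vanishing} requires; and the quadratic term's vanishing Cauchy integral is exactly the identity $\underset{\T}{\text{Avg}}\frac{(\Phi_\al)^2}{z_\al} = 0$ computed in the paper, now translated into the $Z$-variable statement $\Cau\bigl([\overline{\Cau(|Z|^2)_\al}]^2/\overline{Z_\al}\bigr) = 0$ via Lemma \ref{lem: Titchmarsh}(2). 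The only step requiring any real care is the Plemelj-based rewriting of the quadratic term sketched above.
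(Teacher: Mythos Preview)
Your approach is correct and is essentially the same as the paper's: define $z$ and $\Phi$ via \eqref{z ansatz} and \eqref{Phi solution}, observe that the bracket in \eqref{DBC 2} specializes under this ansatz to the LHS of \eqref{aug main eq} (up to the factor $e^{ict}$ and a constant), note that applying $\Cau$ to it yields \eqref{DBC holo}, and invoke Lemma~\ref{lem: vanishing} with the realness of the quotient (equivalently of the LHS of \eqref{DBC 1}, which uses that \eqref{KBC on T} holds for this $\Phi$). The paper's proof records exactly these steps, just more tersely.
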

\begin{proof}
    Let $Z$ be a Riemann mapping as above solving \eqref{DBC holo}. Define $\Phi$ by \eqref{Phi solution}, and $z$ by \eqref{z ansatz}. It is easy to check by repeating the calculations above that \eqref{KBC on T} holds, and the left hand side of \eqref{DBC 2} equals the left hand side of \eqref{DBC 1}, which is real. Also, $\Cau$ applied to the terms in the brackets of the left hand side of \eqref{DBC 2} gives the left hand side of \eqref{DBC holo}. Since $Z$ is a Riemann mapping, $z_\alpha$ is not identically zero. By Lemma \ref{lem: vanishing}, the terms in the bracket in \eqref{DBC 2} must vanish on $\T$. \eqref{aug main eq} follows.
\end{proof}
We briefly comment on the effect of rotational symmetry of Riemann mapping on solutions to \eqref{DBC holo}. As is alluded to earlier, the Riemann mapping that takes $0$ to the center of mass of the fluid domain is only unique up to a rotation. As a result, if $z(w,t)$ and $\Phi(w,t)$ solve \eqref{KBC on T} and \eqref{DBC 4}, so do $z(we^{i\alpha(t)},t)$, $\Phi(w e^{i\alpha(t)},t)$ for any smooth real-valued function $\alpha(t)$. However, in general such a solution will no longer have the form \eqref{z ansatz}, and thus will not give rise to a solution to \eqref{DBC holo}. If it does, there should exist a Riemann mapping $Z_1$ and a real constant $c_1$ such that 
\begin{equation}
    z(we^{i\alpha(t)},t) = Z_1(w) e^{ic_1 t},
\end{equation}
or
\begin{equation}\label{rot sym 1}
    Z(we^{i\alpha(t)})=Z_1(w) e^{i(c_1-c) t}
\end{equation}
for all $w\in \overline{\mathbb D}$ and all $t$. The rotation function $\alpha(t)$ is either constant or non-constant. If $\alpha(t)=\alpha_0$ is a constant, one obtains $Z_1(w)=Z(we^{i\alpha_0})$ and $c_1=c$. This corresponds to the constant rotation symmetry $(Z(w),c)\to (Z(we^{i\alpha_0}),c)$ of solutions to \eqref{DBC holo}. Next assume $\alpha(t)$ is non-constant. By the constant rotation symmetry shown above, we can assume $\alpha(0)=0$ without loss of generality, and then $\alpha(t_0)\ne 0$ for some $t_0$. By continuity of $\alpha(t)$, there exists $t_n$ such that $\alpha(t_n)=\frac{\alpha(t_0)}{n}$ for all positive integers $n$. Evaluating \eqref{rot sym 1} at $t=0$ we get $Z(w)=Z_1(w)$. Evaluating it at $t=t_n$ we get
\begin{equation}
    \left|Z\left(we^{i\frac{\alpha(t_0)}{n}}\right)\right|=|Z(w)|.
\end{equation}
This implies that 
\begin{equation}
    \left|Z\left(e^{i\frac{m}{n}\alpha(t_0)}\right)\right|=|Z(1)|
\end{equation}
for all positive rational numbers $\frac mn$. By continuity we see that $Z$ maps $\T$ to another circle centered at the origin. Recall by our earlier convention that $Z(0)=0$. Thus there exists a $\lambda>0$ and $\alpha_0\in\mathbb R$ such that
\begin{equation}\label{dilation Riem map}
    Z(w) = \lambda e^{i\alpha_0}w.
\end{equation}
It follows from \eqref{rot sym 1} again that $\alpha(t)=(c_1-c)t$. This corresponds to the symmetry $(Z(w),c)\to (Z(w),c_1)$ of solutions to \eqref{DBC holo}. Note, however, that this symmetry occurs only when $Z$ is the simple Riemann map given by \eqref{dilation Riem map}.

\section{Formulation for Bifurcation Analysis}

In view of previous numerical works on this problem, we look for solutions to \eqref{DBC holo} in the case $b=\sigma$. This particular choice of $b$ includes the unit disc $Z_0(w)=w$. We point out that this is not the only reasonable choice of $b$. For instance, instead of fixing $b=\sigma$, one could allow $b$ to vary in the solutions so as to keep the area of the fluid domain fixed. We leave such other considerations for future studies. Note that we can further take $\sigma=1$ without loss of generality, and reduce \eqref{DBC holo} to 
\begin{equation}\label{DBC main}
    2\Cau\left(\frac{Z_\alpha}{|Z_\alpha|}\right)_\alpha-2iZ_\alpha+c^2\Cau(Z\Hil(|Z|^2)_\alpha)=0,
\end{equation}
since having a general $\sigma$ is equivalent to replacing $c^2$ by $\frac{c^2}{\sigma}$. We now define $\F(Z,c)$ to be the left hand side of \eqref{DBC main}:
\begin{equation}\label{def: F}
    \F(Z,c)=2\Cau\left(\frac{Z_\alpha}{|Z_\alpha|}\right)_\alpha-2iZ_\alpha+c^2\Cau(Z\Hil(|Z|^2)_\alpha),
\end{equation}
so that \eqref{DBC main} can be rewritten as $\F(Z,c)=0$.
To study mapping properties of $\F$, we need certain spaces of holomorphic functions on $\mathbb D$. Let $H(\overline{\mathbb D})$ be the space of functions that are holomorphic on $\mathbb D$ with continuous boundary values on $\T$. For the remainder of the paper, we fix the symmetry multiplicity $m\in\mathbb N$, $m\ge 2$. For $k\in \mathbb N\cup \{0\}$, and $0<\beta<1$, we define
\begin{equation}
    X^{k,\beta}=\left\{f\in H(\overline{\mathbb{D}}):f\big|_\T\in C^{k,\beta}(\T),f(w)=\sum_{n=0}^\infty a_n w^{mn+1}, ~a_k\in\R\right\}
\end{equation}
with norm $\|f\|_{X^{k,\beta}}=\|f\big|_\T\|_{C^{k,\beta}(\T)}$. Note that $f\in H(\overline{\mathbb D})$ and $f\big|_\T\in C^{k,\beta}(\T)$ imply that $f\in C^{k,\beta}(\overline{\mathbb D})$.
It is easy to see that the symmetry conditions on $X^{k,\beta}$ can be characterized more instrinsically as 
\begin{equation}
\label{eqn:Syms}
    f(e^{i\frac{2\pi}{m}}w)=e^{i\frac{2\pi}{m}}f(w),~\overline{f(w)}=f(\overline{w}).
\end{equation}
As is well known, the $C^{k,\beta}(\T)$ norm can be characterized in terms of Littlewood-Payley projectors. For $f\in C^{k,\beta}(\T)$ with $f(\tau)=\sum_{n=-\infty}^\infty a_n \tau^{n}$ and $j\in \mathbb N$, let $(\Delta_0 f)(\tau)=\sum_{|n|\le 1}a_n\tau^n$ and 
\begin{equation}
    (\Delta_j f)(\tau)=\sum_{2^j\le |n|< 2^{j+1}}a_n \tau^{n}.
\end{equation}
There exists a $C>0$ such that for all $f\in C^{k,\beta}(\T)$,
\begin{equation}
    \tfrac1C\|f\|_{C^{k,\beta}(\T)}\le \sup_{j\ge 0}2^{j(k+\beta)}\|\Delta_j f\|_{L^\infty(\mathbb{T})}\le C\|f\|_{C^{k,\beta}(\T)}.
\end{equation}

\subsection{Mapping properties of $\F$}
For proper definition of $\F$ and later studies of global continuation, we will restrict $\F$ to certain subsets of $X^{k,\beta}$. First, we need a condition that guarantees the functions will be proper Riemann maps used to parametrize the fluid domain. That is supplied by the chord-arc condition:
\begin{equation}\label{chord-arc}
    \inf_{\tau_1\ne\tau_2\in\T}\left|\frac{f(\tau_1)-f(\tau_2)}{\tau_1-\tau_2}\right|>0.
\end{equation}
We have
\begin{lemma}
    Let $f\in H(\overline{\mathbb{D}})$, $f\big|_\T\in C^{1,\beta}(\T)$. Then the following are equivalent:
    \begin{enumerate}[label=(\alph*)]
        \item $f$ satisfies \eqref{chord-arc}.
        \item $f$ is injective on $\T$ and $f'(\tau)\ne 0$ for all $\tau\in \T$.
        \item $f$ is a diffeomorphism from $\overline{\mathbb D}$ onto $f(\overline{\mathbb D})$.
    \end{enumerate}
\end{lemma}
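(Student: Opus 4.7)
My plan is to prove the cyclic chain of implications (a)$\Rightarrow$(b)$\Rightarrow$(c)$\Rightarrow$(a), with the bulk of the work concentrated in (b)$\Rightarrow$(c), where I must bootstrap boundary information into interior univalence.

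The implication (a)$\Rightarrow$(b) is essentially immediate. Injectivity on $\T$ follows from the strict positivity of the chord-arc infimum, since any coincidence $f(\tau_1)=f(\tau_2)$ with $\tau_1\ne\tau_2$ would force the ratio in \eqref{chord-arc} to vanish. To see $f'(\tau)\ne0$ on $\T$, I would let $\tau_2\to\tau_1$ along $\T$ in the chord-arc ratio; since $f\big|_\T\in C^{1,\be}$ the limit is $\tau_1 f'(\tau_1)/|\tau_1|^2$ up to a unit factor (or more cleanly, $f'(\tau_1)$ after taking the non-tangential limit), and the chord-arc bound ensures this limit has modulus at least the infimum.

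The implication (c)$\Rightarrow$(a) is also soft. Given that $f$ is a diffeomorphism on $\overline{\mathbb D}$, the difference quotient $g(\tau_1,\tau_2):=\dfrac{f(\tau_1)-f(\tau_2)}{\tau_1-\tau_2}$ extends continuously from the off-diagonal of $\T\times\T$ to the full compact set by setting $g(\tau,\tau):=f'(\tau)$, using the $C^1$ regularity on $\T$. Injectivity on $\T$ makes $g$ nonzero off the diagonal, and the diffeomorphism property makes $f'$ nonzero on the diagonal. Compactness then yields a positive minimum.

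The main obstacle is (b)$\Rightarrow$(c), where I need to pass from boundary behavior to a global diffeomorphism statement. The idea is to combine the Jordan curve theorem with the argument principle. Because $f$ is injective and continuous on $\T$, the image $f(\T)$ is a Jordan curve, so $\C\setminus f(\T)$ splits into a bounded component $\Omega_{\mathrm{int}}$ and an unbounded one $\Omega_{\mathrm{ext}}$. Since $f(\overline{\mathbb D})$ is compact and connected and avoids $f(\T)$ on its interior image, $f(\mathbb D)\subset \Omega_{\mathrm{int}}$. For any $w_0\in\Omega_{\mathrm{int}}$ the winding number of $f\big|_\T$ around $w_0$ is a nonzero integer of constant value on $\Omega_{\mathrm{int}}$; by the argument principle this winding number equals the count of preimages of $w_0$ in $\mathbb D$ with multiplicity, which is a non-negative integer that is strictly positive for at least one $w_0$ (any point in the nonempty open set $f(\mathbb D)$). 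Therefore the winding number must be $+1$, and every point of $\Omega_{\mathrm{int}}$ has exactly one preimage in $\mathbb D$. This gives univalence of $f$ on $\mathbb D$, so $f'$ is nowhere zero on $\mathbb D$ by the standard fact about univalent holomorphic functions; together with the hypothesis $f'\ne 0$ on $\T$, the map $f$ is a local diffeomorphism at every point of $\overline{\mathbb D}$. Injectivity on $\overline{\mathbb D}$ follows from univalence in $\mathbb D$, injectivity on $\T$, and the disjointness $f(\mathbb D)\subset \Omega_{\mathrm{int}}$, $f(\T)=\p\Omega_{\mathrm{int}}$. A local diffeomorphism that is globally injective on a compact set is a global diffeomorphism onto its image, completing the proof.
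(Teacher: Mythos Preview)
Your approach is essentially the same as the paper's: both reduce (b)$\Rightarrow$(c) to the Jordan curve theorem plus the argument principle, and treat the other two implications as soft. Two spots in your (b)$\Rightarrow$(c) are looser than they should be. First, the assertion that ``$f(\overline{\mathbb D})$ \ldots\ avoids $f(\T)$ on its interior image'' (i.e., $f(\mathbb D)\cap f(\T)=\emptyset$) is not automatic from compactness and connectedness; the paper obtains it by first using the argument principle on $\Omega_{\mathrm{ext}}$ (winding number $0$, hence no preimages there) and then the open mapping theorem to rule out interior points mapping onto $f(\T)$. Second, your deduction that the winding number equals $+1$ from ``nonzero and non-negative'' needs the additional input that the winding number of a Jordan curve about an interior point is $\pm 1$; without that, ``positive integer'' does not force $+1$. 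Both are easy fixes, and once supplied your argument and the paper's coincide.
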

\begin{proof}
    It is obvious that (a) and (b) are equivalent. It is also obviously that (c) implies (b). It remains to show that (b) implies (c). If (b) holds, $f\big|_\T$ is a $C^{1,\beta}$ Jordan curve. By the Jordan curve theorem, $f(\T)$ separates $\C$ into the interior region $\text{int}(f(\T))$ and the exterior region $\text{ext}(f(\T))$. For any $z\in \text{int}(f(\T))$, the number of points $w\in\disc$ satisfying $f(w)=z$ is equal to the winding number of $f\big|_\T$ around $z$, which is one. Similarly, if $z\in \text{ext}(f(\T))$, the number of points $w\in \disc$ satisfying $f(w)=z$ is equal to the winding number of $f\big|_\T$ around $z$, which is zero. If there exists a $w\in \disc$ such that $f(w)\in f(\T)$, by the open mapping theorem, there exists a $w'\in \disc$ close to $w$ such that $f(w')\in \text{ext}(f(\T))$. This contradicts the above result. Thus $f(\disc)=\text{int}(f(\T))$, $f(\T)=\partial (\text{int}(f(\T)))$, $f:\overline\disc\to f(\overline\disc)$ is bijective, and $f'$ can be continuously extended to $\overline \disc$ such that $f'\ne 0$ on $\overline\disc$. It follows that $f^{-1}$ can be continuously extended to $f(\overline\disc)$ such that $(f^{-1})'\ne 0$ on $f(\overline\disc)$. 
\end{proof}
We thus define 
\begin{equation}
    U^{k,\beta}=\{f\in X^{k,\beta}~|~f \text{ satisfies the chord-arc condition }\eqref{chord-arc}\}.
\end{equation}

It shall be useful in the sequel to have a quantitative measure of the chord-arc condition \eqref{chord-arc}. To that end, we define, for $\tau_1,\tau_2\in\T$,
\begin{equation}
    \label{eqn:QuantChord-Arc}
    \CA(f)(\tau_1,\tau_2) = \begin{dcases} \frac{\tau_1-\tau_2}{f(\tau_1) - f(\tau_2)} & \tau_1\neq\tau_2\\ \frac{1}{\p_\al f(\tau)} & \tau_1=\tau_2=\tau \end{dcases}.
\end{equation}
Then, $f\in X^{k,\be}$ will satisfy the chord-arc condition \eqref{chord-arc} if and only if
\[
\norm{\CA(f)}_{L^\infty(\T^2)} < +\infty.
\]

\begin{lemma}\label{lem: F-deriv}
    For $k\ge 2$, $\F$ is analytic from $U^{k,\beta}\times \R$ to $X^{k-2,\beta}$. The Fr\'echet derivative $D_Z\F$ is given by 
    \begin{equation}
        \label{eqn:FrechetDerivativeF}
D_Z \F(Z,c)[\zeta]  = \Cau\left( \frac{\z_\alpha}{\abs{Z_\alpha}} - \frac{Z_\al^2\overline{\z_\al}}{\abs{Z_\al}^3} \right)_\alpha-2i \zeta_\al + c^2\Cau\left( \zeta\Hil(\abs{Z}^2)_\al+Z\Hil( \zeta\overline{Z} + \overline{\z}Z )_\al \right) .
    \end{equation}
\end{lemma}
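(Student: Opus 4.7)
The plan is to check that $\F$ maps $U^{k,\be}\times\R$ into $X^{k-2,\be}$, establish real-analyticity term by term, and then differentiate directly to obtain \eqref{eqn:FrechetDerivativeF}. The main obstacle will be the nonlinear term $Z_\al/|Z_\al|$, which involves the antiholomorphic $\overline{Z_\al}$ and so cannot be analytic in $Z$ in the complex sense.

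For the mapping property, the chord-arc condition on $Z \in U^{k,\be}$ ensures $|Z_\al| > 0$ uniformly on $\T$, so $Z_\al/|Z_\al| \in C^{k-1,\be}(\T)$. Boundedness of $\Cau$ and $\Hil$ on $C^{k-1,\be}(\T)$ (via the Littlewood--Paley characterization recalled above), together with the algebra property of H\"older spaces and the loss of one derivative in $\p_\al$, show that $\F(Z,c)$ is holomorphic on $\disc$ with $C^{k-2,\be}(\T)$ boundary trace. For the symmetries \eqref{eqn:Syms}, note that $|Z|^2$ and $|Z_\al|$ are invariant under $w\mapsto e^{i\frac{2\pi}{m}}w$ and even under $w\mapsto\overline{w}$; tracing the operations ($\p_\al$, $\Cau$, $\Hil$, pointwise multiplication, conjugation) through each of the three summands confirms that $\F(Z,c)$ carries the rotation equivariance and reflection symmetry required of elements of $X^{k-2,\be}$.

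For analyticity, the linear term $-2iZ_\al$ and the third term $c^2\Cau(Z\Hil(|Z|^2)_\al)$ are polynomial in $(Z,\overline{Z},c)$ composed with bounded real-linear operators, hence trivially real-analytic. For the first term I would expand around any $Z_0 \in U^{k,\be}$: setting $\z = Z - Z_0$ and
\begin{equation*}
|Z_\al|^2 = |Z_{0,\al}|^2(1 + \eta),\qquad \eta = \frac{Z_{0,\al}\overline{\z_\al} + \overline{Z_{0,\al}}\z_\al + |\z_\al|^2}{|Z_{0,\al}|^2},
\end{equation*}
the uniform lower bound for $|Z_{0,\al}|$ gives $\|\eta\|_{C^{k-1,\be}(\T)} < 1$ for $\|\z\|_{X^{k,\be}}$ small, so the series $|Z_\al|^{-1} = |Z_{0,\al}|^{-1}\sum_{n\ge 0}\binom{-1/2}{n}\eta^n$ converges absolutely in the Banach algebra $C^{k-1,\be}(\T)$. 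Multiplying by $Z_\al$ and applying the bounded operators $\Cau$ and $\p_\al$ yields a convergent power series representation in $\z$ and $\overline{\z}$; dependence on $c$ is polynomial.

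Finally, the Fr\'echet derivative is obtained by direct computation. From $|Z_\al|^2 = Z_\al \overline{Z_\al}$ one finds $D_Z|Z_\al|[\z] = \Rea(\z_\al \overline{Z_\al})/|Z_\al|$, whence
\begin{equation*}
D_Z\!\left(\frac{Z_\al}{|Z_\al|}\right)\!\![\z] = \frac{\z_\al}{|Z_\al|} - \frac{Z_\al \Rea(\z_\al \overline{Z_\al})}{|Z_\al|^3} = \frac{1}{2}\!\left(\frac{\z_\al}{|Z_\al|} - \frac{Z_\al^2 \overline{\z_\al}}{|Z_\al|^3}\right),
\end{equation*}
after writing $\Rea(\z_\al \overline{Z_\al}) = \tfrac{1}{2}(\z_\al \overline{Z_\al} + \overline{\z_\al}Z_\al)$; the factor $\tfrac{1}{2}$ cancels the leading $2$ in $\F$. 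Leibniz applied to the third term gives $\z\Hil(|Z|^2)_\al + Z\Hil(\z\overline{Z} + \overline{\z}Z)_\al$. Assembling the three pieces produces \eqref{eqn:FrechetDerivativeF}.
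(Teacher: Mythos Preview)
Your proposal is correct and follows essentially the same approach as the paper: both check the mapping property (regularity plus the two symmetries), both treat analyticity as routine (the paper simply declares it ``standard'' while you give the explicit binomial expansion of $|Z_\al|^{-1}$), and both obtain \eqref{eqn:FrechetDerivativeF} by direct differentiation. The only difference in emphasis is that the paper spends its effort on the symmetry check, invoking two dedicated lemmas (Lemmas~\ref{lem: rot sym} and~\ref{lem: conj sym}) that track how $\p_\al$, $\Cau$, and $\Hil$ interact with the rotation and conjugation symmetries, whereas you cover that part in one sentence; conversely you are more explicit than the paper about why the square-root term is real-analytic.
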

We will break the proof of this lemma into several steps. First we show that $\F$ maps into $X^{k-2,\beta}$. The $C^{k-2,\beta}(\T)$ regularity is fairly obvious, as H\"older regularity is preserved under multiplication, square root of strictly positive functions, division by strictly positive functions, $\Cau$ and $\Hil$. Analyticity also follows as the above mentioned maps are all analytic with respect to the relevant H\"older norms. We only need to show the symmetry properties of $X^{k,\beta}$ is perserved under $\F$. To that end, we need the following symmetry lemmas. 
\begin{lemma}\label{lem: rot sym}
    If $f(e^{i\alpha_0}\tau)=e^{i\alpha_0}f(\tau)$ on $\T$ for some $\al_0\in \R$, then 
    \begin{equation}\label{rot sym conversion 1} 
        f_\al(e^{i\alpha_0}\tau)=e^{i\alpha_0}f_\al(\tau),\quad (\Cau f)(e^{i\al_0}w)=e^{i\al_0}(\Cau f)(w),\quad (\Hil f)(e^{i\al_0}\tau)=e^{i\al_0}(\Hil f)(\tau).
    \end{equation} Similarly, if $g(e^{i\alpha_0}\tau)=g(\tau)$ on $\T$ for some $\al_0\in \R$, then 
    \begin{equation}\label{rot sym conversion 2} 
        g_\al(e^{i\alpha_0}\tau)=g_\al(\tau),\quad (\Cau g)(e^{i\al_0}w)=(\Cau g)(w),\quad (\Hil g)(e^{i\al_0}\tau)=(\Hil g)(\tau).
    \end{equation} 
\end{lemma}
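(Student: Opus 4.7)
The plan is to verify the three identities in turn, using: (i) the holomorphic representation $f_\al(w) = iwf'(w)$ from \eqref{d_al} together with the chain rule; (ii) a change of variables $\tau \mapsto e^{i\al_0}\tau$ in the Cauchy integral; and (iii) the Plemelj formula \eqref{Plemelj} to reduce the statement about $\Hil$ to the one about $\Cau$. The two sets of identities in \eqref{rot sym conversion 1} and \eqref{rot sym conversion 2} are structurally the same, so I would prove only \eqref{rot sym conversion 1} in detail and remark that the argument for $g$ is identical (in fact slightly easier, with no outer factor of $e^{i\al_0}$ to track).

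For the angular derivative, parametrize $\tau = e^{i\al}$ and differentiate both sides of $f(e^{i\al_0}e^{i\al}) = e^{i\al_0}f(e^{i\al})$ with respect to $\al$. The chain rule combined with $f_\al(w)=iwf'(w)$ gives directly that $f_\al(e^{i\al_0}\tau) = e^{i\al_0}f_\al(\tau)$.

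For the Cauchy integral, I would write, for $w\in\disc$,
\[
(\Cau f)(e^{i\al_0}w) = \frac{1}{2\pi i}\int_\T \frac{f(\tau)}{\tau - e^{i\al_0}w}\,d\tau
\]
and substitute $\tau = e^{i\al_0}\tau'$. The Jacobian $d\tau = e^{i\al_0}d\tau'$ cancels the factor $e^{i\al_0}$ in $\tau - e^{i\al_0}w = e^{i\al_0}(\tau'-w)$, and then using $f(e^{i\al_0}\tau') = e^{i\al_0}f(\tau')$ produces the desired $e^{i\al_0}(\Cau f)(w)$. (Orientation of $\T$ is preserved since we are rotating by a real angle.) Taking boundary values recovers the identity on $\T$.

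For the Hilbert transform, the Plemelj formula \eqref{Plemelj} gives $\Hil f = \tfrac{2}{i}[(\Cau f)|_\T - \tfrac12\,\underset{\T}{\mathrm{Avg}}\,f - \tfrac12 f]$. Evaluating at $e^{i\al_0}\tau$ and applying the two identities already established reduces the claim to showing that the constant term transforms correctly. A direct change of variable $\al \mapsto \al - \al_0$ in $\underset{\T}{\mathrm{Avg}}\,f = \tfrac{1}{2\pi}\int_0^{2\pi} f(e^{i\al})\,d\al$, combined with the hypothesis, gives $\underset{\T}{\mathrm{Avg}}\,f = e^{i\al_0}\underset{\T}{\mathrm{Avg}}\,f$, so either $e^{i\al_0}=1$ (in which case everything is trivial) or $\underset{\T}{\mathrm{Avg}}\,f = 0$; in either case the constant term cancels and one obtains $(\Hil f)(e^{i\al_0}\tau) = e^{i\al_0}(\Hil f)(\tau)$.

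The only mild subtlety is the constant-term reconciliation in the $\Hil$ step; there is no real obstacle, only some bookkeeping. The argument for $g$ (where the symmetry is $g(e^{i\al_0}\tau)=g(\tau)$) is identical, with the outer $e^{i\al_0}$ absent throughout; in particular $\underset{\T}{\mathrm{Avg}}\,g = \underset{\T}{\mathrm{Avg}}\,g$ is automatic and the constant term passes through unchanged.
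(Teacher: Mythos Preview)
Your proof is correct and follows essentially the same approach as the paper: direct differentiation for $f_\al$, rotation-invariance of $\Cau$ (the paper phrases this abstractly as ``$\Cau$ commutes with rotation'' while you carry out the change of variables explicitly), and the Plemelj formula \eqref{Plemelj} together with the vanishing (or triviality) of $\underset{\T}{\mathrm{Avg}}\,f$ for $\Hil$. One small remark: the appeal to $f_\al(w)=iwf'(w)$ from \eqref{d_al} is unnecessary since $f$ here is merely a function on $\T$, not assumed holomorphic; your actual argument---differentiating $f(e^{i(\al_0+\al)})=e^{i\al_0}f(e^{i\al})$ in $\al$---does not use it and stands on its own.
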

\begin{proof}
    The first equation in \eqref{rot sym conversion 1} follows from direction calculation. Denote by $R$ the rotation map $w\mapsto e^{i\al_0}w$. The condition on $f$ can be written as $f\circ R=e^{i\al_0}f$. We can assume $\al_0\ne 0 \mod 2\pi$, otherwise there is nothing to prove. This implies $\underset{\T}{\text{Avg}}~f=0$. Since the Cauchy integral $\Cau$ commutes with rotation, we have 
    \begin{equation}
        (\Cau f)\circ R=\Cau(f\circ R)=\Cau(e^{i\al_0}f)=e^{i\al_0}\Cau f,
    \end{equation}
    which is equivalent to the middle equation in \eqref{rot sym conversion 1}. The last equation in \eqref{rot sym conversion 1} now follows from \eqref{Plemelj}. \eqref{rot sym conversion 2} can be proven similarly.
\end{proof}
\begin{lemma}\label{lem: conj sym}
    If $\overline{f(\tau)}=f(\overline\tau)$ on $\T$, then 
    \begin{equation}\label{conj sym conversion 1}
        \overline{f_\alpha(\tau)}=-f_\alpha(\overline\tau),\quad \overline{(\Cau f)(w)}=(\Cau f)(\overline{w}),\quad \overline{(\Hil f)(\tau)}=-(\Hil f)(\overline\tau).
    \end{equation}
    Similarly, if $\overline{g(\tau)}=-g(\overline\tau)$ on $\T$, then 
    \begin{equation}\label{conj sym conversion 2}
        \overline{g_\alpha(\tau)}=g_\alpha(\overline\tau),\quad \overline{(\Cau g)(w)}=-(\Cau g)(\overline{w}),\quad \overline{(\Hil g)(\tau)}=(\Hil g)(\overline\tau).
    \end{equation}
    
\end{lemma}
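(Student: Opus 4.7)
The plan is to prove the three pairs of identities in order, handling the angular derivative by direct differentiation, the Cauchy integral by a substitution in the defining integral, and the Hilbert transform by reduction to the previous two via the Plemelj formula \eqref{Plemelj}.

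For the derivative identities, I would parametrize $\tau = e^{i\al}$ so that $\overline\tau = e^{-i\al}$ and differentiate the hypothesis $\overline{f(e^{i\al})} = f(e^{-i\al})$ in $\al$. The left-hand side becomes $\overline{f_\al(\tau)}$, while the chain rule on the right produces $-f_\al(\overline\tau)$ owing to the sign from $\al \mapsto -\al$. The case of $g$ differs only by an overall minus in the hypothesis, so the identical computation yields $\overline{g_\al(\tau)} = g_\al(\overline\tau)$.

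For the Cauchy integral identities, I would conjugate the defining integral
\[
(\Cau f)(w) = \frac{1}{2\pi i}\int_\T \frac{f(\tau)}{\tau - w}\,d\tau
\]
and then substitute $\sig = \overline\tau$ in the resulting expression, using the hypothesis $\overline{f(\tau)} = f(\overline\tau)$ to convert $\overline{f}$ back into $f$. The substitution reverses the orientation of $\T$, producing a sign that precisely cancels against the sign produced by conjugating the prefactor $(2\pi i)^{-1}$, leaving $\overline{(\Cau f)(w)} = (\Cau f)(\overline w)$. For $g$, the extra minus in the hypothesis survives the computation, yielding $\overline{(\Cau g)(w)} = -(\Cau g)(\overline w)$. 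Exactly the same substitution applied to $\underset{\T}{\mathrm{Avg}}\,f = \frac{1}{2\pi i}\int_\T f(\tau)\,\frac{d\tau}{\tau}$ (or to $\underset{\T}{\mathrm{Avg}}\,g$) shows that $\underset{\T}{\mathrm{Avg}}\,f$ is real and $\underset{\T}{\mathrm{Avg}}\,g$ is purely imaginary.

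The Hilbert transform identities then drop out of the Plemelj formula \eqref{Plemelj}, which rearranges to $\Hil f = -2i\bigl[(\Cau f)\big|_\T - \tfrac{1}{2}\underset{\T}{\mathrm{Avg}}\,f - \tfrac{1}{2} f\bigr]$. Conjugating this identity and combining it with the previous step gives $\overline{\Hil f(\tau)} = -\Hil f(\overline\tau)$ in the $f$ case (where the real average is unchanged under conjugation) and $\overline{\Hil g(\tau)} = \Hil g(\overline\tau)$ in the $g$ case (where the imaginary average picks up a compensating sign). The whole argument is essentially bookkeeping of signs; the only place one might slip is in the orientation reversal of the Cauchy integral, which is the step I would write out most carefully.
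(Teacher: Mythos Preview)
Your proposal is correct and follows essentially the same approach as the paper's proof: both handle the derivative by direct differentiation of the hypothesis in the angular variable, the Cauchy integral by conjugating and substituting $\overline\tau$ for $\tau$ (with the orientation reversal absorbing the sign from conjugating $(2\pi i)^{-1}$), and the Hilbert transform via the Plemelj formula \eqref{Plemelj} after noting that the average of $f$ is real.
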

\begin{proof}
    The first equation in \eqref{conj sym conversion 1} follows from direct calculation. To get the second equation, we compute
    \begin{align*}
        \overline{(\Cau f)(w)} &= \overline{\frac1{2\pi i}\int_\T \frac{f(\tau)}{\tau-w}~d\tau}\\
        &=-\frac1{2\pi i}\int_\T \frac{f(\overline\tau)}{\overline\tau-\overline{w}}~d\overline\tau\\
        &=-\frac1{2\pi i}\int_{-\T}\frac{f(\tau)}{\tau-\overline{w}}~d\tau\\
        &=(\Cau f)(\overline{w}).
    \end{align*}
    Note that the condition on $f$ implies $\overline{\underset{\T}{\text{Avg}}~f}=\underset{\T}{\text{Avg}}~f$. The last equation in \eqref{conj sym conversion 1} now follows from \eqref{Plemelj}. \eqref{conj sym conversion 2} can be proven similarly.
\end{proof}

\begin{proof}[Proof of Lemma \ref{lem: F-deriv}]
    As is explained above, the $C^{k-2,\beta}(\T)$ regularity of $\F(Z,c)\big|_\T$ and the analyticity of $\F$ with respect to the H\"older norms are standard. \eqref{eqn:FrechetDerivativeF} follows from a direct calculation. $\F(Z,c)$ is obviously in $H(\overline{\mathbb D})$ by construction. We only need to show that $\F(Z,c)$ satisfies the symmetry requirements of $X^{k-2,\beta}$. Since $Z\in X^{k,\beta}$, we have $Z(e^{i\frac{2\pi}{m}}w)=e^{i\frac{2\pi}{m}}Z(w)$ and $\overline{Z(w)}=Z(\overline{w})$ on $\overline\disc$. It follows from Lemma \ref{lem: rot sym}  that $Z_\al(e^{i\frac{2\pi}{m}}w)=e^{i\frac{2\pi}{m}}Z_\alpha(w)$, $|Z_\al|(e^{i\frac{2\pi}{m}}\tau)=|Z_\al|(\tau)$, $\left(\frac{Z_\al}{|Z_\al|}\right)_\al(e^{i\frac{2\pi}{m}}\tau)=e^{i\frac{2\pi}{m}}\left(\frac{Z_\al}{|Z_\al|}\right)_\al(\tau)$, $(|Z|^2)_\al(e^{i\frac{2\pi}{m}}\tau)=(|Z|^2)_\al(\tau)$, $\left[Z\Hil(|Z|^2_\al)\right](e^{i\frac{2\pi}{m}}\tau)=e^{i\frac{2\pi}{m}}\left[Z\Hil(|Z|^2_\al)\right](\tau)$. This implies that $\F(Z,c)(e^{i\frac{2\pi}{m}}w)=e^{i\frac{2\pi}{m}}\F(Z,c)(w)$. It follows from Lemma \ref{lem: conj sym} that $\overline{Z_\alpha(w)}=-Z_\al(\overline{w})$, $|Z_\al(w)|=|Z_\al(\overline{w})|$, $\overline{\left(\frac{Z_\al}{|Z_\al|}\right)_\alpha(\tau)}=\left(\frac{Z_\al}{|Z_\al|}\right)_\alpha(\overline\tau)$, $\overline{(|Z|^2)_\al(\tau)}=-(|Z|^2)_\al(\overline\tau)$, $\overline{\left[Z\Hil(|Z|^2_\al)\right](\tau)}=\left[Z\Hil(|Z|^2_\al)\right](\overline\tau)$. This implies $\overline{\F(Z,c)(w)}=\F(Z,c)(\overline w)$. The proof is complete.
\end{proof}

We next prove certain compactness properties of $\F$.

\begin{lemma}\label{lem: Fredholm}
    Let $k\ge 2$. For any $(Z,c)\in U^{k,\beta}\times\R$, $D_Z\F(Z,c):X^{k,\beta}\to X^{k-2,\beta}$ is Fredholm with index zero.
\end{lemma}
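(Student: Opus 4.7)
The plan is to split $D_Z\F(Z,c) = T + K$ where $T$ is a Toeplitz-type operator of Fredholm index zero and $K$ is a compact remainder, combining lower-order terms with a Hankel-commutator piece.

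The lower-order part $R[\zeta] := -2i\zeta_\al + c^2\Cau\bigl(\zeta\Hil(|Z|^2)_\al + Z\Hil(\zeta\overline Z + \overline\zeta Z)_\al\bigr)$ is at most first order in $\zeta$: $\Hil$ and $\Cau$ preserve $C^{k-1,\be}(\T)$ regularity, and multiplication by $Z$-dependent $C^{k,\be}$ coefficients loses at most one derivative of $\zeta$. After symmetry verification parallel to Lemma \ref{lem: F-deriv}, $R$ maps boundedly $X^{k,\be} \to X^{k-1,\be}$, which embeds compactly in $X^{k-2,\be}$ by Arzelà--Ascoli. The second-order part $L[\zeta] = \Cau\bigl(\tfrac{\zeta_\al}{|Z_\al|} - \tfrac{Z_\al^2 \overline{\zeta_\al}}{|Z_\al|^3}\bigr)_\al$ I would split as $L = T - H$ with $T[\zeta] = \Cau(\zeta_\al/|Z_\al|)_\al$ and $H[\zeta] = \Cau(b\overline{\zeta_\al})_\al$, $b = Z_\al^2/|Z_\al|^3$.

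Since $\zeta_\al$ is holomorphic, $T$ factors as $\partial_\al \circ \Cau \circ M_{1/|Z_\al|} \circ \partial_\al$, a composition involving the Toeplitz operator whose strictly positive, zero-winding symbol $1/|Z_\al|$ yields Fredholm index zero by classical singular-integral theory on $\T$. For $H$, Lemma \ref{lem: Titchmarsh} gives $\Cau\overline{\zeta_\al} = 0$ (since $\zeta_\al(0) = 0$), so $\Cau(b\overline{\zeta_\al}) = [\Cau, b]\overline{\zeta_\al}$; standard Calderón-type commutator estimates show this gains a derivative over $\overline{\zeta_\al}$, making $H$ compact into $X^{k-2,\be}$. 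Hence $D_Z\F(Z,c) = T + (R - H)$ is Fredholm of index zero.

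The main technical obstacle is the Calderón commutator estimate for $H$: the kernel $(b(\tau) - b(w))/(\tau-w)$ of $[\Cau, b]$ is naively only as regular as $b$ itself, so the derivative gain must be extracted by differentiating the commutator and handling the resulting weakly singular integral. A clean backup is a homotopy argument: each $L_t$ along a path $Z_t \in U^{k,\be}$ from $Z$ to $Z_0(w) = w$ remains elliptic with nonvanishing high-frequency symbol $-n^2/|Z_{t,\al}|$, hence Fredholm; homotopy invariance of the index reduces the computation to $Z_0$, where a direct Fourier calculation gives $L^{(Z_0)}[w^{mn+1}] = -(mn+1)^2 w^{mn+1}$ for $n \ge 1$ and $L^{(Z_0)}[w] = 0$, so both kernel and cokernel are one-dimensional and the index is zero.
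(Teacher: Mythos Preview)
Your decomposition $D_Z\F = T + (R - H)$ and the treatment of the lower-order remainder $R$ match the paper's proof exactly. The paper likewise writes the leading-order piece as $T - H$ and shows $H$ is compact via the same commutator identity $\Cau(b\overline{\zeta_\al}) = [\Cau,b]\overline{\zeta_\al}$; however, rather than trying to extract a derivative gain, it simply invokes the standard compactness of $[\Cau,b]$ on H\"older spaces (Gohberg--Krupnik). So your ``main technical obstacle'' is less severe than you fear: compactness, not smoothing, is all that is needed.

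The one substantive difference is in the handling of $T$. You appeal to classical Toeplitz index theory for the positive symbol $|Z_\al|^{-1}$, which is morally right but glosses over the fact that $T$ must be analyzed on the \emph{symmetric} subspace $X^{k,\be}$ rather than the full holomorphic H\"older space; the index computation does not transfer automatically to invariant subspaces. The paper instead proves directly that $\zeta\mapsto\Cau(\zeta/|Z_\al|)$ is \emph{invertible} on $X^{k-1,\be}$ (Lemma~\ref{lemma:LinearLeading-OrderInvertible}): it extends the operator to the non-holomorphic space $C^{k-1,\be}_{\mathrm{sym}}(\T)$ as $\mathfrak T = M_{|Z_\al|^{-1}} + [\Cau,|Z_\al|^{-1}]$, which is visibly identity-plus-compact, shows $\ker\mathfrak T = 0$ via a short $L^2$ pairing argument and the holomorphic/antiholomorphic rigidity of Lemma~\ref{lemma:Holo-Antiholo}, and then checks that solutions with holomorphic data are themselves holomorphic. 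This gives a stronger conclusion (isomorphism rather than index zero) and handles the symmetry constraints cleanly. Your homotopy backup is a reasonable alternative in spirit, but note that to invoke homotopy invariance you still need each $L_t$ to be Fredholm, which is precisely the statement at issue; the high-frequency symbol heuristic you cite would itself require justifying that the antiholomorphic Hankel piece is lower order, i.e.\ essentially the commutator compactness again.
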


Before beginning the proof of Lemma \ref{lem: Fredholm}, we need to introduce a couple of tools which we will utilize. First, we have the following simple lemma:
\begin{lemma}
    \label{lemma:Holo-Antiholo}
    Let $f,g\in H(\overline\disc)$ with $f(0)=0$ or $g(0)=0$, and let $r\in C(\T)$ be real-valued. If 
    $f = r\overline{g}$ on $\T$, then $f\equiv0$ in  $\overline{\disc}$.
    
\end{lemma}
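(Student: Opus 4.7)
My approach will mirror the argument in Lemma \ref{lem: vanishing}, where a holomorphic function that agrees with an antiholomorphic one on $\T$ is shown to vanish. The key observation is that the product $fg$, which is itself holomorphic on $\disc$ with continuous boundary values, has boundary data $fg\big|_\T = r \cdot g\overline{g} = r|g|^2$ that is \emph{real-valued}, thanks to the reality of $r$.

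Once that is in hand, I would argue that $fg$ must be a (real) constant. One can conclude this in two equivalent ways: either by noting that the imaginary part of $fg$ is a harmonic function on $\disc$ with continuous vanishing boundary values, hence identically zero by the maximum principle for harmonic functions; or, in the spirit of the paper's machinery, by invoking Lemma \ref{lem: Titchmarsh} parts (1) and (2) to compute
\begin{equation*}
    fg = \Cau\bigl(fg\big|_\T\bigr) = \Cau\bigl(\overline{fg}\big|_\T\bigr) = \overline{(fg)(0)},
\end{equation*}
which is a constant. The hypothesis $f(0)=0$ or $g(0)=0$ then forces the constant to vanish, so $fg \equiv 0$ on $\overline{\disc}$.

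It remains to pass from $fg \equiv 0$ to $f \equiv 0$. Since $\disc$ is a domain and $f, g$ are holomorphic, the product vanishing identically forces one of the factors to be identically zero on $\disc$. If $f \equiv 0$ we are done immediately; otherwise $g \equiv 0$, and substituting into the boundary identity $f = r\overline{g}$ gives $f\big|_\T = 0$, from which the maximum modulus principle (or, equivalently, the Cauchy integral formula $f(w) = \Cau(f|_\T)(w)$) yields $f \equiv 0$ throughout $\overline{\disc}$.

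I do not anticipate any genuine obstacle. The only minor subtlety is that Lemma \ref{lem: Titchmarsh} was stated for $C^{0,\beta}$ boundary values, whereas here we have merely continuous boundary data; but parts (1) and (2) of that lemma rest only on the Cauchy integral formula and its conjugate, both of which make sense for every function in $H(\overline{\disc})$, so the argument goes through verbatim in the continuous category.
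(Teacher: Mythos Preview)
Your proof is correct and follows essentially the same route as the paper's own argument: multiply through by $g$ to get a holomorphic function with real boundary values, conclude it is the constant $(fg)(0)=0$, and then use the integral-domain property of holomorphic functions together with the boundary identity to force $f\equiv 0$. Your remark about the $C^{0,\beta}$ hypothesis in Lemma~\ref{lem: Titchmarsh} is well taken and correctly resolved; the paper sidesteps this by appealing directly to the maximum principle rather than to $\Cau$, but either justification works.
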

\begin{proof}
    If $f=r\overline{g}$ on $\T$, then $fg$ is continuous on $\overline{\disc}$, holomorphic on $\disc$ and real-valued on $\T$. It then follows that $fg$ is constant on $\overline{\disc}$. Since $f(0)=0$ or $g(0)=0$, we have
    \[
    fg\equiv0 \text{ on } \overline{\disc}.
    \]
    If $f$ is not identically zero on $\overline\disc$, then $g$ is. Since $f=r\overline{g}$, this implies $f\equiv 0$ on $\mathbb T$ and thus on $\overline \disc$. In other words, $f$ must be zero on $\overline\disc$.
\end{proof}

We will also need to work with the following scale of spaces:
\begin{equation}
    \label{eqn:HolderSymSpaces}
    C_\mathrm{sym}^{k,\be}(\T) \coloneqq \set{ f \in C^{k,\be}(\T) : f(\tau) = \sum_{n\in\Zbb} a_n \tau^{mn+1}, \ a_n \in \R }.
\end{equation}
 We shall equip $C_\mathrm{sym}^{k,\be}(\T)$ with the same norm as that of $X^{k,\be}$; that is,
\[
\norm{f}_{C_\mathrm{sym}^{k,\be}(\T)} \coloneqq \norm{f\vert_\T}_{C^{k,\be}(\T)}.
\]
Note that the symmetry conditions on $C_\mathrm{sym}^{k,\be}(\T)$ are similar to those on $X^{k,\be}$, and can once again be characterized by \eqref{eqn:Syms}. However, the frequencies in the Fourier series are allowed to be negative. In other words, $C_\mathrm{sym}^{k,\be}(\T)$ is not a space of holomorphic functions. In fact, we have
\begin{equation}\label{symmetry class ext}
    C_\mathrm{sym}^{k,\be}(\T)\cap H(\overline\disc)=X^{k,\beta}.
\end{equation}

We are now ready to address Lemma \ref{lem: Fredholm}. It will be useful to break the proof into a couple of parts and we begin with the following claim:
\begin{lemma}
\label{lemma:LinearLeading-OrderInvertible}
    Let $(Z,c) \in U^{k,\be}\times\R$ for $k\ge 2$. Then, the operator
    \begin{equation}
        \label{eqn:Leading-OrderLinear}
        \z \mapsto \Cau\left( \frac{\z_\al}{\abs{Z_\al}} \right)_\al,
    \end{equation}
    as a map from $X^{k,\be}$ to $X^{k-2,\be}$, is invertible. 
\end{lemma}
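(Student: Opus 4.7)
My plan is to exhibit $\mathcal{L}$ as a composition of four bijections via outer-function factorization of $|Z_\al|$; the injectivity piece can alternatively be read off directly from Lemma \ref{lemma:Holo-Antiholo}, which I describe first since it is shorter and uses the lemma just proved.

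For injectivity: if $\mathcal{L}\zeta = 0$, then $\Cau(\zeta_\al/|Z_\al|)$ is a holomorphic function on $\disc$ with vanishing boundary $\al$-derivative, so it is constant on $\overline{\disc}$. Plemelj's formula \eqref{Plemelj} forces $\zeta_\al/|Z_\al|$ to have only non-positive Fourier modes on $\T$. The modes of $\zeta_\al$ lie in $\{mn+1:n\geq 0\}$ and those of $|Z_\al|$ lie in $m\Zbb$, so the quotient's modes lie in $m\Zbb+1$, missing zero for $m\geq 2$; hence $\zeta_\al/|Z_\al|=\overline{g}$ on $\T$ for some $g\in H(\overline{\disc})$ with $g(0)=0$. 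Rearranging, $\zeta_\al = |Z_\al|\overline{g}$ on $\T$, with $\zeta_\al(0)=0$ and $|Z_\al|$ real-valued, so Lemma \ref{lemma:Holo-Antiholo} yields $\zeta_\al \equiv 0$, whence $\zeta = 0$.

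For surjectivity via factorization: since $|Z_\al|$ is smooth and positive on $\T$, the outer function $H \in H(\overline{\disc})$ with $|H|^2 = |Z_\al|$ on $\T$ and $H(0)>0$ is well-defined and nowhere vanishing on $\overline{\disc}$. Uniqueness of outer functions combined with the $m$-fold rotational and reflection symmetries of $|Z_\al|$ forces $H(e^{i2\pi/m}w) = H(w)$ and $\overline{H(w)}=H(\overline w)$, so $H$ has real Fourier coefficients and modes only in $\{0,m,2m,\dots\}$. On $\T$, $1/|Z_\al|=(1/H)(1/\overline{H})$, leading to the factorization
\[
\mathcal{L}\zeta \;=\; \p_\al\bigl(\Cau\bigl((\p_\al \zeta/H)/\overline{H}\bigr)\bigr),
\]
which realizes $\mathcal{L}$ as the composition of four operators on suitable holomorphic Hölder-regular symmetric subspaces: $\p_\al$, multiplication by $1/H$, the Toeplitz operator with symbol $1/\overline{H}$, and $\p_\al$ again. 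The two $\p_\al$'s are Fourier-diagonal isomorphisms (with multiplier $i(mn+1)$, nonzero for all $n\geq 0$ since $m\geq 2$); multiplication by $1/H$ is invertible because $H$ is nowhere zero on $\overline{\disc}$; and the Toeplitz operator is invertible by classical Toeplitz theory, its symbol $1/\overline{H}$ being smooth, nonvanishing on $\T$, and of winding number zero ($\overline{H}$ extends to a nowhere-zero antiholomorphic function on $\overline{\disc}$).

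Assembling, $\mathcal{L}$ is a composition of bijections, hence itself an isomorphism $X^{k,\be}\to X^{k-2,\be}$. The main technical work will be in carefully verifying (i) the symmetries and regularity of the outer function $H$, and (ii) that the classical $H^2$-invertibility of the Toeplitz operator refines to the Hölder-regular holomorphic subspaces with the required mode and coefficient-symmetry restrictions. The remaining steps reduce to direct checks on Fourier modes.
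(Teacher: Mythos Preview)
Your approach is correct but takes a genuinely different route from the paper. The paper reduces to showing that $\mathfrak{S}:\zeta\mapsto\Cau(\zeta/|Z_\al|)$ is invertible on $X^{k-1,\be}$, then extends $\mathfrak{S}$ to an operator $\mathfrak{T}$ on the larger space $C_\mathrm{sym}^{k-1,\be}(\T)$ (allowing negative Fourier modes) by adding the term $|Z_\al|^{-1}(\id-\Cau)\zeta$. It then writes $\mathfrak{T}(\zeta)=\zeta/|Z_\al|+[\Cau,|Z_\al|^{-1}](\zeta)$ as invertible-plus-compact-commutator to get Fredholm index zero, and uses Lemma~\ref{lemma:Holo-Antiholo} together with an $L^2$ pairing to show $\ker\mathfrak{T}=\{0\}$; a second application of Lemma~\ref{lemma:Holo-Antiholo} shows that the restriction $\mathfrak{S}=\mathfrak{T}|_{X^{k-1,\be}}$ is still surjective onto $X^{k-1,\be}$. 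Your approach instead constructs the inverse explicitly via the outer-function factorization $1/|Z_\al|=(1/H)(1/\overline{H})$ on $\T$, decomposing the operator into four concretely invertible pieces (two differentiations, a holomorphic multiplication, and a co-analytic Toeplitz operator whose inverse is simply $T_{\overline{H}}$). The paper's route is softer---it avoids outer functions entirely and leans on off-the-shelf commutator compactness---while yours is more constructive and yields an explicit inverse, at the cost of the outer-function and Toeplitz bookkeeping you flag in (i)--(ii); both of those checks go through, since $\log|Z_\al|\in C^{k-1,\be}(\T)$ gives $H\in C^{k-1,\be}(\overline{\disc})$ nonvanishing, and the inverse Toeplitz operator $T_{\overline{H}}$ visibly preserves the H\"older and symmetry subspaces. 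One small correction: $\p_\al$ itself does not preserve $X^{k,\be}$, since its multiplier $i(mn+1)$ sends real Fourier coefficients to imaginary ones; you should either use $D_\al=\frac{1}{i}\p_\al$ as the paper does, or carry the intermediate spaces as $iX^{k-1,\be}$. The composition is unaffected because the two factors of $i$ cancel.
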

\begin{proof}
    Let $D_\al \coloneqq \frac{1}{i}\p_\al$ and observe that we can write
    \[
    \Cau\left( \frac{\z_\al}{\abs{Z_\al}} \right)_\al = -D_\al\Cau(\abs{Z_\al}^{-1}D_\al\z).
    \]
    Given that $D_\al$ is clearly invertible from $X^{k,\be}$ to $ X^{k-1,\be}$ and from $X^{k-1,\be}$ to $ X^{k-2,\be}$ (see \eqref{d_al}), we need only consider the invertibility of the operator $\mathfrak S:X^{k-1,\beta}\to X^{k-1,\beta}$ given by
    \begin{equation}
        \label{eqn:Leading-OrderLinearReduced}
        \mathfrak S(\z)= \Cau\left(\frac{\z}{\abs{Z_\al}}\right).
    \end{equation}
    Our strategy is to prove invertibility of the extended operator $\mathfrak{T}: C_\mathrm{sym}^{k-1,\be}(\T) \to C_\mathrm{sym}^{k-1,\be}(\T)$ given by
    \begin{equation}
        \label{eqn:Leading-OrderLinearReducedAlt}
        \mathfrak{T}(\z) = \Cau\left(\frac{\z}{\abs{Z_\al}}\right) + \frac{(\id - \Cau)}{\abs{Z_\al}}\z.
    \end{equation}
    We note that it is straightforward to verify that $\mathfrak{T}$ respects the symmetries of $C_\mathrm{sym}^{k-1,\be}(\T)$ (via Lemmas \ref{lem: rot sym} and \ref{lem: conj sym}). As $(\id-\Cau)\zeta=0$ if $\zeta$ is holomorphic, $\mathfrak T\big|_{X^{k-1,\beta}}=\mathfrak S$. Moreover, $\mathfrak{T}$ is Fredholm of index zero, which is most apparent upon writing
    \[
    \mathfrak{T}(\z) = \frac{\z}{\abs{Z_\al}} + \left[\Cau,\abs{Z_\al}^{-1}\right](\z).
    \]
    The commutator is compact on $C_\mathrm{sym}^{k-1,\be}(\T)$ by Lemma 6.3 of \cite{GoKr1}, while the first term is plainly invertible given $Z\in U^{k,\be}$. Invertibility of $\mathfrak T$ will now follow upon showing that the kernel is trivial.

    Let us now suppose that we have $\z\in C_\mathrm{sym}^{k-1,\be}(\T)$ such that $\mathfrak{T}(\z) = 0$. Note that $$\Cau\left(\frac{\z}{\abs{Z_\al}}\right), \overline{(\id-\Cau)\zeta}\in H(\overline\disc), \quad\text{and } [(\id-\Cau)\zeta](0)=0.$$
    Applying Lemma \ref{lemma:Holo-Antiholo}, we will then have
    \begin{equation}\label{eq: kernel 1}
        \Cau\left(\frac{\z}{\abs{Z_\al}}\right) = 0,
    \end{equation}
    and so
    \begin{equation}\label{eq: kernel 2}
        \frac{(\id-\Cau)}{\abs{Z_\al}}\z = 0.
    \end{equation}
    \eqref{eq: kernel 2} implies that $\Cau\z = \z$ and so, from \eqref{eq: kernel 1}, we have
    \[
    0 = \left(\Cau\left(\frac{\z}{\abs{Z_\al}}\right),\z\right)_{L^2(\T)} = \left(\frac{\z}{\abs{Z_\al}},\Cau\z\right)_{L^2(\T)}=\int_0^{2\pi}\frac{|\zeta|^2}{|Z_\alpha|}~d\al.
    \]
    This implies that $\zeta=0$, and we must have
    $
    \ker\mathfrak{T} = \{0\}$.

    We now want to prove invertibility of $\mathfrak S=\mathfrak{T}\big|_{X^{k-1,\be}}$. Fix some $f\in X^{k-1,\be}\subset C_\mathrm{sym}^{k-1,\be}(\T)$. By invertibility of $\mathfrak T$ on $C_\mathrm{sym}^{k-1,\be}(\T)$, there exists a unique $\z \in C_\mathrm{sym}^{k,\be}(\T)$ such that $\mathfrak{T}(\z) = f$. In other words,
    \begin{equation}
        f-\Cau\left(\frac{\z}{\abs{Z_\al}}\right) = \frac{(\id - \Cau)}{\abs{Z_\al}}\z.
    \end{equation}
    We have
    \begin{equation}
        f-\Cau\left(\frac{\z}{\abs{Z_\al}}\right), \overline{(\id-\Cau)\zeta}\in H(\overline\disc), \quad\text{and } [(\id-\Cau)\zeta](0)=0.
    \end{equation}
    Lemma \ref{lemma:Holo-Antiholo} again implies that $f - \Cau\left(\frac{\z}{\abs{Z_\al}}\right) = 0$,
    and so $(\id-\Cau)\zeta=0$.
    Therefore, we must in fact have $\z\in H(\overline\disc)$. \eqref{symmetry class ext} implies $\z\in X^{k-1,\beta}$.
    This completes the proof.

\end{proof}

We also need to state a standard result on compact embeddings of H\"{o}lder spaces. Let $C_*^r(\T)$, $r\in\R$, denote the scale of Zygmund spaces on $\T$. For $k\in\N$ and $\be\in(0,1)$, it is well known that
\begin{equation}
    \label{eqn:HolderZygmund}
    C_*^{k+\be}(\T) = C^{k,\be}(\T).
\end{equation}
We have the following result regarding embeddings of $C_*^r(\T)$:
\begin{lemma}
\label{lemma:ZygmundEmbedding}
Let $r\in\R$ and $\epsilon>0$. Then,
\[
C_*^{r+\epsilon}(\T) \hookrightarrow C_*^r(\T).
\]
Moreover, this embedding is compact.
\end{lemma}
\begin{proof}
Fix $r\in\R$ and $\epsilon>0$. Let $\Lam(D_\al) \coloneqq (\id - \D_\al)^\frac{1}{2}$. Then, for $\rho\in\R$, $\Lam^\rho(D_\al): C_*^r(\T) \to C_*^{r-\rho}(\T)$ is an isomorphism (see \cite{Tay1}). Moreover, we note that we have the continuous embedding $C_*^r(\T) \hookrightarrow C_*^\rho(\T)$ for any $\rho\leq r$ (e.g., \cite{ScTr1}). Via the above results and \eqref{eqn:HolderZygmund}, the claim will follow upon showing that $C^{0,\epsilon}(\T) \hookrightarrow C_*^0(\T)$ for $0<\epsilon<1$. The desired embedding factors through $C^0(\T)$:
\[
C_*^\epsilon(\T) = C^{0,\epsilon}(\T) \hookrightarrow C^0(\T) \hookrightarrow C_*^0(\T),
\]
where the first (continuous) embedding is compact by Arzel\`{a}-Ascoli.
\end{proof}

We are now ready for the proof of Lemma \ref{lem: Fredholm}:
\begin{proof}[Proof of Lemma \ref{lem: Fredholm}]
 Fix $(Z,c) \in U^{k,\be}\times\R$. We have already shown $D_Z\F(Z,c): X^{k,\be} \to X^{k-2,\be}$. We will proceed by showing that $D_Z\F(Z,c)$ is of the form
 \begin{equation}
     \label{eqn:LinearizationHomeoPlusCpt}
     D_Z\F(Z,c) = H + K,
 \end{equation}
 where $H: X^{k,\be} \to X^{k-2,\be}$ is a homeomorphism and $K: X^{k,\be} \to X^{k-2,\be}$ is compact.
 
 Examining \eqref{eqn:FrechetDerivativeF}, the first term is the leading-order term (second order) and all remaining terms map $X^{k,\be} \to X^{k-1,\be}$. Given that $X^{k-1,\be}$ compactly embeds in $X^{k-2,\be}$ by Lemma \ref{lemma:ZygmundEmbedding}, these lower-order terms all represent compact operators.

 Let us then consider the first term. By Lemma \ref{lemma:LinearLeading-OrderInvertible},
 \[
 \z\mapsto \Cau\left( \frac{\z_\al}{\abs{Z_\al}} \right)_\al
 \]
 is a homeomorphism $X^{k,\be} \to X^{k-2,\be}$. Hence, noting the invertibility of $D_\al: X^{k,\be} \to X^{k-1,\be}$ and $ X^{k-1,\be} \to X^{k-2,\be}$, we are left to prove that the operator
 \begin{equation}
     \label{eqn:Leading-OrderLinearCpt}
     \z \mapsto -\Cau\left( \frac{Z_\al^2\zebar}{\abs{Z_\al}^3} \right).
 \end{equation}
 is compact on $X^{k-1,\beta}$.
 In face, we can easily use Lemmas \ref{lem: rot sym} and \ref{lem: conj sym} to verify that this operator respects the symmetries of $X^{k-1,\be}$ and thus that it maps $X^{k-1,\be}$ to itself. Then, observing that $\Cau\zebar \equiv 0$ for $\z\in X^{k,\be}$, we see that \eqref{eqn:Leading-OrderLinearCpt} has the structure of a commutator:
 \begin{equation}
     \label{eqn:Leading-OrderLinearCpt2}
     \z \mapsto -\left[ \Cau, \abs{Z_\al}^{-3}Z_\al^2 \right](\zebar).
 \end{equation}
 Lemma 6.3 of \cite{GoKr1}, in conjunction with Lemma \ref{lem: d_a Cau}, imply that this commutator, and thus the operator \eqref{eqn:Leading-OrderLinearCpt}, is compact on $X^{k-1,\be}$.

We have now verified \eqref{eqn:LinearizationHomeoPlusCpt} and the proof is thus completed.
\end{proof}

\section{Local Bifurcation}
We are now prepared to construct bifurcation curves of solutions to our traveling wave equation \eqref{DBC main}. These local curves represent small-amplitude traveling waves on our capillary droplet. We shall use a version of the Crandall-Rabinowitz theorem, particularly a version specialized to real analytic operators. We are largely motivated to use this framework, which has been heavily utilized in the study of the water waves problem, due to its powerful global continuation theorem \cite{BuTo1, Dan1, Dan2}. The Crandall-Rabinowitz theorem we utilize is the following:

\begin{thm}
\label{thm:AnalyticCrandall-Rabinowitz}
Let $X, Y$ be Banach spaces and $\F: X \times \R \to Y$ a real analytic mapping with $\F(\xi_0,\lam) = 0 \in Y$ for  all $\lam\in\R$. Moreover, suppose that for some $\lam_0\in\R$, there exists a nonzero $\xi_0\in X$ such that 
\begin{enumerate}[label=(\roman*)]
\item $D_\xi \F(0,\lam_0):X\to Y$ is a Fredholm operator with index zero,
\item $\ker D_\xi \F(0,\lam_0) = \mathrm{span}\{\xi_0\}$,
\item the transversality condition holds:
\[
D^2_{\lam\xi} \F(0,\lam_0)[\xi_0,1]\not\in\text{\emph{ran }} D_\xi \F(0,\lam_0) ,
\]
\end{enumerate}
It then follows that $(0,\lam_0)$ is a bifurcation point. In particular, there exists $\eps>0$ and an analytic map $(\xitil,\lamtil):(\eps,\eps)\to X\times\R$ 
such that $\xitil(0)=0$, $\lamtil(0)=\lam_0$, $\xitil^\pr(0) = \xi_0$,
\[
\F(\xitil(s),\lamtil(s)) = 0 \quad \text{ for all } \abs{s}<\eps,
\]
 and there exists an open set $U \subset X\times\R$ with $(0,\lam_0) \in U$ such that
\[
\{ (\xi,\lam) \in U\setminus(\{0\}\times\R) : \F(\xi,\lam) = 0 \} = \{ ( \xitil(s), \lamtil(s) ) : 0 < \abs{s} < \eps \}.
\]
\end{thm}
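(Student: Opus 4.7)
The plan is a Lyapunov--Schmidt reduction followed by two applications of the analytic implicit function theorem. Write $L := D_\xi \F(0,\lam_0)$. By hypotheses (i) and (ii), $L$ is Fredholm of index zero with one-dimensional kernel spanned by $\xi_0$, so I can split $X = \mathrm{span}\{\xi_0\}\oplus X_1$ and $Y = Y_0 \oplus \mathrm{ran}(L)$, where $X_1$ is a closed complement of $\ker L$, $Y_0$ is a one-dimensional complement of $\mathrm{ran}(L)$, and $P:Y\to\mathrm{ran}(L)$ is the continuous projection along $Y_0$. Every $\xi\in X$ then decomposes uniquely as $\xi = s\xi_0 + \eta$ with $s\in\R$ and $\eta\in X_1$, and $\F(\xi,\lam)=0$ is equivalent to the coupled system
\begin{align*}
    P\F(s\xi_0+\eta,\lam) &= 0,\\
    (I-P)\F(s\xi_0+\eta,\lam) &= 0.
\end{align*}

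Next, I would solve the first (auxiliary) equation for $\eta$ as a function of $(s,\lam)$. Its Fr\'echet derivative in $\eta$ at $(0,0,\lam_0)$ is $PL\vert_{X_1}:X_1\to\mathrm{ran}(L)$, which is an isomorphism by construction. The analytic implicit function theorem (as in Buffoni--Toland) produces a unique real analytic solution $\eta=\eta(s,\lam)$ on a neighborhood of $(0,\lam_0)$ with $\eta(0,\lam_0)=0$; since $\F(0,\lam)\equiv 0$, uniqueness forces $\eta(0,\lam)\equiv 0$. Substituting yields the scalar analytic \emph{bifurcation equation}
\[
\Phi(s,\lam) := (I-P)\F(s\xi_0+\eta(s,\lam),\lam)=0, \qquad \Phi:\R^2\to Y_0\simeq\R.
\]

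The key reduction step is to factor out the known trivial branch. Since $\Phi(0,\lam)\equiv 0$, analyticity gives $\Phi(s,\lam)=s\Psi(s,\lam)$ with $\Psi$ analytic. Differentiating the defining relation for $\eta$ in $s$ at $(0,\lam_0)$ and using $L\xi_0=0$ shows $\partial_s\eta(0,\lam_0)=0$; a further differentiation in $\lam$, noting that $(I-P)L$ vanishes on $X_1$, then gives
\[
\partial_\lam\Psi(0,\lam_0) = (I-P)\,D^2_{\lam\xi}\F(0,\lam_0)[\xi_0,1],
\]
which is nonzero by the transversality hypothesis (iii). The analytic implicit function theorem applied to $\Psi(s,\lam)=0$ then produces $\lam=\lamtil(s)$ analytic near $0$ with $\lamtil(0)=\lam_0$, and setting $\xitil(s):=s\xi_0+\eta(s,\lamtil(s))$ gives the curve, with $\xitil^\prime(0)=\xi_0$ following from $\eta(0,\lam)\equiv 0$ and $\partial_s\eta(0,\lam_0)=0$.

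The main obstacle is the local uniqueness assertion: \emph{every} nontrivial solution in a sufficiently small neighborhood of $(0,\lam_0)$ lies on the constructed curve. This is not automatic and must be deduced by retracing the reduction. Any such $(\xi,\lam)$ decomposes as $\xi=s\xi_0+\eta$ with $\eta\in X_1$ small; uniqueness in the first implicit function theorem then forces $\eta=\eta(s,\lam)$. Since $\eta(0,\lam)\equiv 0$, if $s=0$ one recovers $\xi=0$, so for $\xi\ne 0$ one must have $s\ne 0$, whence $\Psi(s,\lam)=0$ and uniqueness in the second implicit function theorem forces $\lam=\lamtil(s)$. A delicate point running through every step is to invoke a genuinely \emph{analytic} implicit function theorem rather than merely a $C^k$ one, so that $\xitil$ and $\lamtil$ come out real analytic; this is precisely the feature that will later justify appealing to the Dancer/Buffoni--Toland analytic global continuation theory in Section 5.
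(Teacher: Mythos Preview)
Your Lyapunov--Schmidt sketch is correct and is essentially the standard argument; in particular your computation of $\partial_\lam\Psi(0,\lam_0)$ and your handling of local uniqueness via the two nested implicit function theorems are right. Note, however, that the paper does not actually prove this theorem: it simply quotes it and refers the reader to Buffoni--Toland (Theorem 8.3.1) for the proof in the analytic category, and that reference carries out precisely the reduction you outline.
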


The above theorem originally goes back to the work of Crandall-Rabinowitz \cite{CrRa1}. However, they considered operators $\F$ of class $C^k$ and we are interested in analytic $\F$. For a proof of Theorem \ref{thm:AnalyticCrandall-Rabinowitz} in the analytic framework considered here, the interested reader may consult \cite{BuTo1} (in particular, Theorem 8.3.1). Local bifurcation (i.e., various versions of Theorem \ref{thm:AnalyticCrandall-Rabinowitz}) has long and often been applied to the study of water waves. A common application is the problem we consider here: proving the existence of small-amplitude traveling waves in various scenarios (e.g., with constant vorticity, with H\"{o}lder continuous vorticity, with discontinuous vorticity, with or without surface tension, with critical layers, with wind forcing and so on). Recall that a critical layer is a curve along which $\tta_y \equiv 0$ (as a point where $\grad\tta = 0$ is called a stagnation point). For just a few examples of local bifurcation applied to the water waves problem, the interested reader may consult \cite{BDT1,CoSt1,CSV1,Wah1,WBS1} and the references therein.

Though bifurcation theory is a typical strategy for producing traveling wave solutions, particularly for free-surface flows of ideal fluids, it is not the only option available and, in some cases, it cannot be applied. For example, in \cite{LeTi1}, Leoni-Tice demonstrated the existence of viscous traveling gravity-capillary waves (i.e., traveling wave solutions to the free boundary incompressible Navier-Stokes equations), however local bifurcation cannot be applied in this scenario and so the authors are forced to utilize a number of novel analytical techniques in order to overcome the various difficulties posed by the presence of viscosity.

For definiteness, in the remaining of the paper, we will fix the regularity of the domain of $\F$ at $X^{2,\beta}$ in addition to fixing the value of $m$. One could use spaces with higher regularity and obtain analytic dependence of the higher norms, but this is not the main concern of the authors. One can easily verify that $Z_0(w)=w\in U^{2,\beta}$ and $\F(Z_0,c)=0$ for all $c\in \R$. This provides a trivial curve of solutions. In this section, we will be studying our traveling wave equation by constructing a sequence of local curves of solutions, which bifurcate from the trivial solution curve, by applying the theory of local bifurcation. Our first step in this process will be to compute the linearization of $\F$ at the trivial solution curve. 

\begin{lemma}
    For $\zeta\in X^{2,\beta}$, denote by $\widehat\z_k$ the (real) Fourier coefficient of $\zeta$ with frequency $mk+1$, i.e.
\begin{equation}
    \zeta(w)=\sum_{k=0}^\infty \widehat\z_k w^{mk+1}.
\end{equation}
Then
\begin{align}
    \left(D_Z\F(Z_0,c)[\zeta]\right)(w)&=2\widehat\z_0 w -\sum_{k=1}^\infty \widehat\z_k(m^2k^2-c^2mk-1)w^{mk+1}.\label{DF(Z0)}\\
    \left(D^2_{cZ}\F(Z_0,c)[\zeta,1]\right)(w)&=2c\sum_{k=1}^\infty\widehat\z_k mkw^{mk+1}.\label{D^2F(Z0)}
\end{align}
\end{lemma}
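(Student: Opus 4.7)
The plan is to plug $Z = Z_0(w) = w$ directly into the general formula \eqref{eqn:FrechetDerivativeF} for $D_Z\F(Z,c)[\zeta]$ and expand each term using the Fourier series $\zeta(w) = \sum_{k\geq 0}\widehat{\zeta}_k w^{mk+1}$. The substitution produces a number of key simplifications: $Z_\alpha = iw$, $|Z_\alpha| = 1$ on $\T$, $Z_\alpha^2/|Z_\alpha|^3 = -w^2$, and, crucially, $|Z|^2 \equiv 1$ on $\T$, which annihilates the summand $\Cau(\zeta\Hil(|Z|^2)_\alpha)$ entirely.

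I would handle the three summands of $D_Z\F$ in turn. For the first, $\zeta_\alpha = i\sum_k \widehat{\zeta}_k(mk+1)w^{mk+1}$, while on $\T$ the reality of $\widehat{\zeta}_k$ together with $\overline{w} = w^{-1}$ gives $\overline{\zeta_\alpha} = -i\sum_k \widehat{\zeta}_k(mk+1)w^{-(mk+1)}$. Multiplication by $-w^2$ in the second piece shifts the powers to $w^{1-mk}$, all of which are negative for $k \geq 1$ (and cancel with the corresponding $k=0$ contribution from $\zeta_\alpha/|Z_\alpha|$). Applying $\Cau$ kills all the strictly negative frequencies, leaving $i\sum_{k\geq 1}\widehat{\zeta}_k(mk+1)w^{mk+1}$; a further $\partial_\alpha$ yields $-\sum_{k\geq 1}\widehat{\zeta}_k(mk+1)^2 w^{mk+1}$. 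The $-2i\zeta_\alpha$ term contributes $2\sum_{k\geq 0}\widehat{\zeta}_k(mk+1)w^{mk+1}$.

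For the third term, I would expand $(\zeta\overline{Z} + \overline{\zeta}Z)\vert_\T = 2\widehat{\zeta}_0 + \sum_{k\geq 1}\widehat{\zeta}_k(w^{mk} + w^{-mk})$, differentiate in $\alpha$ (killing the $k=0$ constant), apply $\Hil$ via $\Hil w^n = -i\,\text{sgn}(n)w^n$, multiply by $Z=w$, and apply $\Cau$. After the negative-frequency halves are annihilated by $\Cau$, this contributes $c^2 \sum_{k\geq 1}\widehat{\zeta}_k\, mk\, w^{mk+1}$. Assembling the three pieces: the $k=0$ mode is $2\widehat{\zeta}_0 w$ (coming only from $-2i\zeta_\alpha$), and for $k\geq 1$ the coefficient simplifies as
\[
-(mk+1)^2 + 2(mk+1) + c^2 mk = -(mk+1)(mk-1) + c^2 mk = -(m^2k^2 - c^2 mk - 1),
\]
yielding \eqref{DF(Z0)}. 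Formula \eqref{D^2F(Z0)} follows by differentiating \eqref{DF(Z0)} in $c$, since only $-c^2 mk$ depends on $c$.

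No genuine obstacle appears in this argument; the work is purely mechanical Fourier bookkeeping. The only point requiring modest care is tracking how $\Cau$ and $\Hil$ act on the mixture of positive and negative frequencies generated by the antiholomorphic pieces $\overline{\zeta_\alpha}$ and $\overline{\zeta}Z$ when restricted to $\T$ via $\overline{w} = w^{-1}$.
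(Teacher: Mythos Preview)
Your proposal is correct and follows essentially the same approach as the paper: substitute $Z_0(w)=w$ into the general Fr\'echet derivative formula \eqref{eqn:FrechetDerivativeF}, expand each piece in the Fourier basis $w^{mk+1}$, and use the action of $\Cau$ and $\Hil$ on monomials (together with $m\ge 2$ to ensure the exponents $1-mk$ are strictly negative) to read off the coefficients. The paper's proof is terser but structurally identical, and likewise obtains \eqref{D^2F(Z0)} by differentiating \eqref{DF(Z0)} in $c$.
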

\begin{proof}
    Using \eqref{eqn:FrechetDerivativeF} on $Z_0(w)=w$, we get 
\begin{equation}
    D_Z\F(Z_0,c)[\zeta]=\Cau\left(\zeta_\alpha+w^2\overline{\z_\al}\right)_\al-2i\zeta_\al+c^2\Cau\left(w\Hil(\zeta \overline w+\overline\z w)_\al\right).
\end{equation} 
Recalling \eqref{d_al} and the simple facts that $\Cau(w^n)=w^n$, $i\Hil(\tau^n)=\tau^n$, $\Cau(w^{-n})=0$, $i\Hil(\tau^{-n})=-\tau^{-n}$ for integer $n\ge 1$, we can easily compute that
\begin{equation}
    \z_\al(w)= iw\z'(w)=i\sum_{k=0}^\infty \widehat\z_k(mk+1)w^{mk+1}.
\end{equation}
\begin{equation}
    \left(\zeta_\alpha+w^2\overline{\z_\al}\right)_\al=-\sum_{k=1}^\infty \widehat\z_k(mk+1)\left[(mk+1)w^{mk+1}-(-mk+1)w^{-mk+1}\right].
\end{equation}
\begin{equation}
    \Hil(\zeta\overline w+\overline\z w)_\alpha=\sum_{k=1}^\infty \widehat\z_k mk(\tau^{mk}+\tau^{-mk}).
\end{equation}
\eqref{DF(Z0)} follows by
noting that $m\ge 2$. \eqref{D^2F(Z0)} follows directly from \eqref{DF(Z0)}. All of the series above converge uniformly on compact subsets of $\disc$ and on $L^2(\T)$.
\end{proof}

We can now state our main local bifurcation theorem. For simple reference, let 
\begin{equation}
\label{eqn:BifurcationValues}
    c_{mk}=\sqrt{mk-\frac{1}{mk}}, \quad k=1,2,3,\dots
\end{equation}
be the bifurcation values. Recall that $m\ge 2$ is a fixed integer. We also denote the solution set and the trivial solution curve by 
\begin{equation}
\label{eqn:SolnSet}
    \mathcal N=\{(Z,c)\in U^{2,\beta}\times \R:\F(Z,c)=0\},\quad \mathcal T=\{(Z_0,c):c\in \R\}.
\end{equation}
\begin{thm}
\label{thm:MainLocal}
    Let $(Z_0,c)\in \mathcal T$ be given. 
    \begin{enumerate}[label=(\roman*)]
        \item If $c\ne \pm c_{mk}$ for any $k$, then no bifurcation occurs near $(Z_0,c)$. In other words, there exists a neighborhood $V$ of $(Z_0,c)$ in $U^{2,\beta}\times\R$ such that $\mathcal N \cap V = \mathcal T\cap V$.
        \item If $c=\pm c_{mk}$ for some $k\ge 1$, then $(Z_0,c)$ is a bifurcation point. In particular, there exists $\eps>0$ and an analytic map $(\widetilde{Z},\widetilde{c}):(-\eps,\eps)\to U^{2,\beta}\times\R$ such that 
        \begin{enumerate}[label=(\alph*)]
            \item $(\widetilde{Z}(s),\widetilde{c}(s))\in \mathcal N$ for all $s\in (-\eps,\eps)$;
            \item $\widetilde{Z}(0)=Z_0$, $\widetilde{c}(0)=c=\pm c_{mk}$, $[\widetilde{Z}'(s)](w)=w^{mk+1}$, $\widetilde{c}'(s)=0$;
            \item there exists an open neighborhood $V$ of $(Z_0,c)$ in $U^{2,\beta}\times\R$ such that $(\mathcal N\setminus\mathcal T)\cap V= \{(\widetilde{Z}(s),\widetilde{c}(s)):0<|s|<\eps\}$.
        \end{enumerate}
        
    \end{enumerate}
\end{thm}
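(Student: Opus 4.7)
The strategy is to apply Theorem \ref{thm:AnalyticCrandall-Rabinowitz} at each bifurcation value $c=\pm c_{mk}$ and, at the remaining $c$, to use the analytic implicit function theorem. The key inputs are the Fourier-diagonal formulas \eqref{DF(Z0)} and \eqref{D^2F(Z0)} for the linearization at the trivial solution, the Fredholm index zero property from Lemma \ref{lem: Fredholm}, and the analyticity of $\F:U^{2,\beta}\times\R\to X^{0,\beta}$ from Lemma \ref{lem: F-deriv}.

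For part (i), $D_Z\F(Z_0,c)$ acts diagonally on the basis $\{w^{mn+1}\}_{n\ge 0}$ with eigenvalue $2$ at $n=0$ and $-(m^2n^2-c^2mn-1)$ at $n\ge 1$. At $c^2=c_{mk}^2=mk-1/(mk)$ one has the factorization
\begin{equation*}
m^2n^2-c_{mk}^2mn-1 = \frac{(n-k)(m^2nk+1)}{k},
\end{equation*}
whose only non-negative integer root is $n=k$; moreover the values $c_{mj}^2=mj-1/(mj)$ are strictly increasing in $j$ and hence pairwise distinct. Therefore, when $c\neq\pm c_{mj}$ for every $j\ge 1$, no eigenvalue vanishes and, together with Lemma \ref{lem: Fredholm}, the linearization is an isomorphism. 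The analytic implicit function theorem produces a unique analytic branch $c'\mapsto Z(c')$ near $(Z_0,c)$ solving $\F=0$; by uniqueness it must coincide with the trivial branch $c'\mapsto Z_0$, so $\mathcal N\cap V=\mathcal T\cap V$ on a small neighborhood $V$.

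For part (ii), fix $c=\pm c_{mk}$ with $k\ge 1$. The same factorization gives $\ker D_Z\F(Z_0,c)=\mathrm{span}_\R\{w^{mk+1}\}$, one-dimensional, and the range is the closed codimension-one subspace of $X^{0,\beta}$ consisting of functions with vanishing $w^{mk+1}$-coefficient, consistent with Fredholm index zero. Transversality follows directly from \eqref{D^2F(Z0)}: $D^2_{cZ}\F(Z_0,c)[w^{mk+1},1]=2c\cdot mk\cdot w^{mk+1}$ is a nonzero (since $c\ne 0$) multiple of the missing Fourier mode, hence outside the range. All three hypotheses of Theorem \ref{thm:AnalyticCrandall-Rabinowitz} apply and yield the analytic curve $(\widetilde{Z}(s),\widetilde{c}(s))$ with $\widetilde{Z}(0)=Z_0$, $\widetilde{c}(0)=c$, $\widetilde{Z}'(0)(w)=w^{mk+1}$, together with the asserted local exhaustion of non-trivial solutions.

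The supplementary claim $\widetilde{c}'(0)=0$ is not part of Theorem \ref{thm:AnalyticCrandall-Rabinowitz} and requires a separate Lyapunov--Schmidt step. Using $\F(Z_0,c)\equiv 0$ to kill all pure $c$-derivatives and differentiating $\F(\widetilde{Z}(s),\widetilde{c}(s))=0$ twice at $s=0$, one obtains that $\widetilde{c}'(0)$ is proportional to the $w^{mk+1}$-coefficient of $D^2_{ZZ}\F(Z_0,c)[w^{mk+1},w^{mk+1}]$. The plan is to compute this coefficient by expanding $\F(w+sw^{mk+1},c)$ through $O(s^2)$ on $\T$: a direct manipulation (using $|\tau|=1$, $\Hil(\tau^n)=-i\,\mathrm{sgn}(n)\tau^n$, and the fact that $\Cau$ annihilates negative frequencies) shows that the curvature contribution $2\Cau(Z_\al/|Z_\al|)_\al$ appears at $s^2$ only at frequency $-2mk+1<0$, which is killed by $\Cau$, while $c^2\Cau(Z\Hil(|Z|^2)_\al)$ produces only the modes $w$ and $w^{2mk+1}$. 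Both miss $mk+1$, giving $\widetilde{c}'(0)=0$. The main obstacle is exactly this final cancellation: tracking the curvature nonlinearity through second order is the one step that is not immediate from the spectral data of the linearization, though on the trivial solution the algebra is elementary.
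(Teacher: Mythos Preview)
Your argument for part (i) and for the Crandall--Rabinowitz verification in part (ii) matches the paper's proof essentially line for line; the added factorization $m^2n^2-c_{mk}^2mn-1=(n-k)(m^2nk+1)/k$ is a nice explicit way to see the kernel is exactly one-dimensional.

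The genuine difference is in how you obtain $\widetilde c'(0)=0$. The paper does \emph{not} compute $D^2_{ZZ}\F(Z_0,c)[w^{mk+1},w^{mk+1}]$ at all. Instead it introduces the rotation $\rot Z(w)=e^{-i\pi/m}Z(e^{i\pi/m}w)$, checks that $\F\circ\rot=\rot\circ\F$, and observes that $\rot(w^{m+1})=-w^{m+1}$ (reducing first to $k=1$ by replacing $m$ with $mk$). Since $D^2_{ZZ}\F$ is bilinear while $D^2_{cZ}\F$ is linear in the $Z$-slot, applying the same second-derivative identity to the rotated branch flips the sign of the Lyapunov--Schmidt quotient for $\widetilde c'(0)$, forcing $\widetilde c'(0)=-\widetilde c'(0)=0$. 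This symmetry argument avoids any expansion of the curvature nonlinearity and would survive unchanged if lower-order terms were added to $\F$, as long as the rotational equivariance is preserved.

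Your direct computation also works, but one intermediate claim is not quite right: the $O(s^2)$ part of $Z_\al/|Z_\al|$ at $Z=w+sw^{mk+1}$ carries frequencies $1$, $2mk+1$, \emph{and} $-2mk+1$, not only $-2mk+1$ (expand $(1+u)/|1+u|$ with $u=s(mk+1)\tau^{mk}$ to second order and then multiply by $i\tau$). The conclusion is unaffected, since none of $1$, $2mk+1$, $-2mk+1$ equals $mk+1$, but you should correct that sentence. With that fix your approach is a valid, more hands-on alternative to the paper's symmetry trick.
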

\begin{proof}
    By Lemma \ref{lem: Fredholm}, $D_Z\F(Z_0,c)$ is Fredholm of index zero. If $c\ne\pm c_{mk}$ for any $k$, $m^2k^2-c^2mk-1\ne 0$ for any $k\ge 1$. It follows from \eqref{DF(Z0)} that $D_Z\F(Z_0,c):X^{2,\beta}\to X^{0,\beta}$ is injective. Therefore it is also surjective and is an isomorphism from $X^{2,\beta}$ to $X^{0,\beta}$. By the implicit function theorem, there exists an $\eps>0$ and a neighborhood $V$ of $(Z_0,c)$ in $U^{2,\beta}\times \R$ such that for all $s\in (-\eps,\eps)$, there exists a unique $\widetilde Z(s)$ such that $(\widetilde Z(s),c+s)\in \mathcal N\cap V$. Since $(Z_0,c+s)\in \mathcal T\subset \mathcal N$. The uniqueness part of the above assertion implies that $\widetilde Z(s)=Z_0$. Thus $\mathcal N\cap V=\mathcal T\cap V$.
    
    Now suppose $c=\pm c_{mk}$ for some $k\ge 1$. It is easy to see that condition (i) in Theorem \ref{thm:AnalyticCrandall-Rabinowitz} can be adapted in our case to $D_Z\F(Z_0,c)$ being Fredholm with index zero, which has been verified above. We have $m^2n^2-c^2mn-1=0$ if $n=k$, and $m^2n^2-c^2mn-1\ne 0$ for all other $n\ge 1$. It follows that $\ker D_Z\F(Z_0,c)=\text{span} \{w^{mk+1}\}$, and ran$~D_Z\F(Z_0,c)\subset \ker \langle w^{mk+1},\cdot\rangle\subset X^{2,\beta}$, where $\langle f, g\rangle = \underset{\T}{\text{Avg}}(\overline f g)$ is the $L^2$ inner product on $\T$. Since $\text{ran}~D_Z\F(Z_0,c)$ has codimension 1, we have ran$~D_Z\F(Z_0,c)= \ker \langle w^{mk+1},\cdot\rangle$. By \eqref{D^2F(Z0)}, we have
    $$(D^2_{cZ}\F(Z_0,c)[w^{mk+1},1]=2cmkw^{mk+1}.$$ Since $2cmk\langle w^{mk+1},w^{mk+1}\rangle\ne 0$, we have
    \begin{equation}
        (D^2_{cZ}\F(Z_0,c)[w^{mk+1},1]\ne \text{ran }D_Z\F(Z_0,c).
    \end{equation}
    All the conclusions in part (ii) above now follow from Theorem \ref{thm:AnalyticCrandall-Rabinowitz}, except for the equation $\widetilde{c}'(s)=0$. To see this, we assume for the moment that $k=1$ and take the second $s$-derivative of $\F(\widetilde{Z}(s),\widetilde{c}(s))=0$ and evaluate at $s=0$ to get
    \begin{equation}
        D^2_{ZZ}\F(Z_0,c)[\widetilde{Z}'(0),\widetilde{Z}'(0)]+2\widetilde{c}'(0)D^2_{cZ}\F(Z_0,c)[\widetilde{Z}'(0),1]+D_Z\F(Z_0,c)\widetilde{Z}''(0)=0.
    \end{equation}
    Integrating againt $w^{m+1}$ and recalling that ran $D_Z\F(Z_0,c)=\ker\langle w^{m+1},\cdot\rangle$, we get 
    \begin{equation}\label{c'(0)}
        \widetilde{c}'(0)=-\frac{\langle w^{m+1}, D^2_{ZZ}\F(Z_0,c)[\widetilde{Z}'(0),\widetilde{Z}'(0)]\rangle }{2\langle w^{m+1}, D^2_{cZ}\F(Z_0,c)[\widetilde{Z}'(0), 1]\rangle }
    \end{equation}
    Consider the rotation map $\rot$ defined by $(\rot Z)(w)=e^{-i\frac{\pi}{m}}Z(e^{i\frac{\pi}{m}}w)$. It is easy to see from the symmetry of $X^{k,\beta}$ that $\rot: X^{2,\beta}\to X^{2,\beta}$, and $\rot: X^{0,\beta}\to X^{0,\beta}$. Note that the angle $\frac{\pi}{m}$ above need be chosen specially to make this happen. Furthermore, as $\Cau$ and $\Hil$ commute with rotation and are linear, it follows easily that $\F(\rot Z, c)=\rot\F(Z,c)$. Thus $\F(\rot \widetilde{Z}(s),\widetilde{c}(s))=0$ for all $s\in(-\eps,\eps)$. Taking its second $s$-derivative and evaluating at $s=0$ as before, we obtain an analog of \eqref{c'(0)}, except that all appearances of $\widetilde{Z}'(0)$ are replaced by $\rot \widetilde{Z}'(0)$. Recall that $[\widetilde{Z}'(0)](w)=w^{m+1}$, so we have $$\rot \widetilde{Z}'(0) = e^{-i\frac{\pi}{m}}(e^{i\frac{\pi}{m}}w)^{m+1}=-w^{m+1}=-\widetilde{Z}'(0).$$
    It follows that $\widetilde{c}'(0)=-\widetilde{c}'(0)$, so $\widetilde{c}'(0)=0$. Finally, to deal with the case when $k>1$, we observe that $c_{mk}$ actually only depends on the product $mk$. By uniqueness of the local bifurcation curve established above, the local solutions for the $k>1$ cases are identical to those obtained in the $k=1$ case where $m$ is replaced by the larger integer $mk$, which we have already proven in the above argument.
\end{proof}

\begin{rmk}
    By the above proof, we see that the solutions in the bifurcation curve emanating from $(Z_0,\pm c_{mk})$ actually have $mk$-fold symmetry rather than just $m$-fold symmetry.
\end{rmk}

\section{Global Bifurcation}

As of now, we have, for each $k\in\N$, two local real-analytic curves $\Curveloc^{k,\pm}$ (one corresponding to each choice of sign of the bifurcation values $c=\pm c_{mk}$, $k\in\N$) of small-amplitude solutions to \eqref{DBC holo}. We would now like to continue each of these to a global curve of solutions $\Curve^{k,\pm}$. Our approach to this global continuation problem will be to apply the following (analytic) global bifurcation theorem \cite{CSV1}:

\begin{thm}
\label{thm:AnalyticGlobalBifurcation}
Let $X$ and $Y$ be Banach spaces, with $\open\subset X\times\R$ open and $F:\open\to Y$ a real-analytic map. Suppose that the following hypotheses are verified:
\begin{enumerate}[start=1,label=(H\arabic*)]
\item $(0,\lam)\in\open$ and $F(0,\lam)=0$ for all $\lam\in\R$;
\item for some $\lam_0\in\R$, there exists $0\neq\xi_0\in X$ such that
\begin{enumerate}[label=(\roman*)]
\item $\p_\xi F(0,\lam_0)$ is Fredholm of index zero,
\item $\ker\p_\xi F(0,\lam_0)=\mathrm{span}\{\xi_0\}$,
\item the transversality condition holds:
\[
(\p_\lam\p_\xi F(0,\lam_0))(1,\xi_0)\not\in\Range(\p_\xi F(0,\lam_0));
\]
\end{enumerate}
\item $\p_\xi F(\xi,\lam)$ is Fredholm of index zero for any $(\xi,\lam)\in\open$ such that $F(\xi,\lam)=0$;
\item for some sequence of closed, bounded $Q_j\subset\open$ with $\open = \bigcup_j Q_j$, the set $\{ (\xi,\lam)\in\open : F(\xi,\lam)=0 \} \cap Q_N$ is compact for each $N\in\N$.
\end{enumerate}
Then, there exists a continuous curve $\mathcal{K} = \{ (\xi(s),\lam(s)) : s\in\R \} \subset \open$ of solutions to $F(\xi,\lam)=0$ such that:
\begin{enumerate}[start=1,label=(C\arabic*)]
\item $(\xi(0),\lam(0)) = (0,\lam_0)$;
\item $\xi(s) = \xi_0s + \bigo(s^2)$ in $X$, $\abs{s}<\eps$, as $s\to0$;
\item there exists a neighborhood $\nbhd$ of $(0,\lam_0)$ and $\eps>0$ sufficiently small such that
\[
\{ (\xi,\lam) \in \nbhd : \xi\neq0 \text{ and } F(\xi,\lam) = 0 \} = \{ (\xi(s),\lam(s)) : 0 < \abs{s} < \eps \};
\]
\item $\Curve$ is locally (real) analytic;
\item one of the following alternatives occurs:
\begin{enumerate}[label=(\alph*)]
\item for every $j\in\N$, there exists $s_j>0$ such that $(\xi(s),\lam(s))\not\in Q_j$ for $\abs{s} > s_j$;
\item there exists $\tau>0$ such that $(\xi(s+\tau),\lam(s+\tau)) = (\xi(s),\lam(s))$ for all $s\in\R$.
\end{enumerate}
\end{enumerate}
In addition, such a curve of solutions is unique modulo reparameterizations.
\end{thm}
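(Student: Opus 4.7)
The plan is to follow the classical Dancer--Buffoni--Toland strategy: start with the local analytic branch produced by Theorem \ref{thm:AnalyticCrandall-Rabinowitz}, then continue it globally by chaining together local analytic parametrizations, using (H3) to control the linearization along the way and (H4) to control compactness. First I would invoke the analytic Crandall--Rabinowitz theorem under hypotheses (H1)--(H2) to produce the local bifurcating curve near $(0,\lambda_0)$, immediately yielding (C1)--(C3) and local analyticity near $s=0$.

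Given any solution $(\xi_*,\lambda_*)\in\open$ with $F(\xi_*,\lambda_*)=0$, the linearization $\p_\xi F(\xi_*,\lambda_*)$ is Fredholm of index zero by (H3), and two cases arise. If it is an isomorphism (a \emph{regular point}), the analytic implicit function theorem produces an analytic curve of solutions through $(\xi_*,\lambda_*)$ parametrized by $\lambda$. If the kernel is nontrivial (a \emph{singular point}), I would perform a Lyapunov--Schmidt reduction, splitting $X$ and $Y$ along the finite-dimensional kernel and cokernel, which reduces $F=0$ locally to a real-analytic equation on a finite-dimensional manifold. The Weierstrass preparation theorem applied to this germ shows that the zero set near $(\xi_*,\lambda_*)$ is a finite union of real-analytic arcs meeting at that point; the incoming branch can then be matched to a uniquely determined outgoing branch (after a Puiseux-type reparametrization if necessary), producing an analytic continuation past $(\xi_*,\lambda_*)$.

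With this local machinery, I would build the global curve by concatenating continuations. Reparametrizing by an arc-length-like parameter $s$ (with respect to a norm that is proper relative to the $Q_j$), one obtains a maximally extended curve on some interval $I\subset\R$. A standard argument shows $I=\R$: if $I$ had a finite endpoint, compactness of the zero set in some $Q_j$ from (H4) would force accumulation at a limit solution, and the branch-extension above would contradict maximality. For the alternatives in (C5), suppose (a) fails, so the curve remains in some $Q_j$ for all $s\in\R$. Then (H4) makes the trace relatively compact; by real-analyticity only finitely many singular events can occur on any compact parameter window, so the curve must eventually revisit a previous point, and local uniqueness of analytic continuation then forces the periodicity alternative (b). Uniqueness modulo reparametrization is also a consequence of the rigidity of real-analytic continuation.

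The main obstacle is the branch-selection problem at singular points: one must verify that an incoming analytic arc unambiguously determines the outgoing arc that continues the curve, so that the global parametrization is well-defined and the resulting object is genuinely a curve rather than a branching locus. This step uses the real-analytic structure in an essential way through Weierstrass preparation / Puiseux expansion, and together with the compactness accounting for alternative (b) constitutes the technical core of the theorem; the remaining verifications (regular-point IFT, Fredholm bookkeeping, and the transversality feeding into Crandall--Rabinowitz) are comparatively routine.
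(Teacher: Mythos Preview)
Your outline is a faithful sketch of the Dancer/Buffoni--Toland strategy and captures the main ingredients (local Crandall--Rabinowitz, analytic continuation through regular and singular points via Lyapunov--Schmidt and Weierstrass preparation, compactness from (H4) to force the dichotomy). However, there is nothing to compare here: the paper does not prove Theorem \ref{thm:AnalyticGlobalBifurcation} at all. It is quoted as a known result, with the proof deferred to Constantin--Strauss--V\u{a}rv\u{a}ruc\u{a} \cite{CSV1} (and historically to Dancer \cite{Dan1,Dan2} and Buffoni--Toland \cite{BuTo1}). The paper's only ``proof'' is the paragraph of citations immediately following the statement.
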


The proof of this theorem can be found in \cite{CSV1}. Originally, this result goes back to the work of Dancer in the early '70's \cite{Dan1, Dan2}. It was subsequently improved by Buffoni-Toland (see Theorem 9.1.1 in \cite{BuTo1}), however the result as stated there is erroneous. A corrected version of this theorem was then stated and proved by Constantin-Strauss-V\u{a}rv\u{a}ruc\u{a} \cite{CSV1}.

\subsection{Local Compactness of the Solution Set}

Our goal is now to use Theorem \ref{thm:AnalyticGlobalBifurcation} to construct the global curve of solutions for our problem. In order to do so, we will need to verify the condition \textit{(H4)} from Theorem \ref{thm:AnalyticGlobalBifurcation}. Note that conditions \textit{(H1)} and \textit{(H2)} have already been checked as part of constructing $\Curveloc^{k,\pm}$, while \textit{(H3)} follows from Lemma \ref{lem: Fredholm}. Before we begin the work of verifying this condition in our setting, we need to do a bit of setup. We define the following collection of subsets of $U^{2,\be}$:
\begin{equation}
\label{eqn:ClosedSets}
Q_N \coloneqq \{ (\z,c) \in U^{2,\be} \times\R: \norm{(\z,c)}_{X^{2,\be}\times\R} \leq N, \ \norm{\CA(\z)}_{L^{\infty}(\T^2)} \leq N \} \text{ for } N \in \N.
\end{equation}
Notice that $Q_N$ is closed and bounded for each $N\in\N$ with $\bigcup_N Q_N = U^{2,\be}$.

We are now ready to verify hypothesis \textit{(H4)}, which we formulate as follows:
\begin{lemma}
\label{lemma:SolnSetLocCpt}
Take $N\in\N$. Then there exists a $C_N>0$ such that for any $(Z,c)\in\mathcal{N}_N\coloneqq \mathcal{N} \cap Q_N$, we have
\begin{equation}
    \label{eqn:LocalCptEst}
    \|(Z,c)\|_{X^{3,\beta}\times \R} \leq C_N.
\end{equation}
As a result, the set $\mathcal{N}_N$ is compact in $X^{2,\beta}\times\R$ for each $N\in\N$. Recall that $\mathcal{N}$ is defined in \eqref{eqn:SolnSet}.
\end{lemma}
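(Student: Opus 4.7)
The plan is to upgrade any solution $(Z,c) \in \mathcal{N}_N$ from $X^{2,\be}$ to $X^{3,\be}$ via a one-step bootstrap on the \emph{pointwise} form of the equation provided by Lemma \ref{lemma:ComplexToReal}, and then to deduce compactness from the compact H\"{o}lder embedding $X^{3,\be} \hookrightarrow X^{2,\be}$ guaranteed by Lemma \ref{lemma:ZygmundEmbedding}. The pointwise equation is essential: the complex form \eqref{DBC main} has already absorbed one $\Cau$, which would cost half of the bootstrap.

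First I would rearrange Lemma \ref{lemma:ComplexToReal} (with $b=\sigma=1$) to isolate the leading-order term,
\[
\left(\frac{Z_\al}{|Z_\al|}\right)_\al = i Z_\al - \frac{c^2}{2} Z\Hil(|Z|^2)_\al + \frac{ic^2}{8}\frac{\bigl[\overline{\Cau(|Z|^2)_\al}\bigr]^2}{\overline{Z_\al}} \quad \text{on } \T.
\]
For $(Z,c)\in\mathcal{N}_N$ each factor on the right lies in $C^{1,\be}(\T)$ with norm controlled by a polynomial in $N$: differentiation, pointwise products, $\Cau$, and $\Hil$ all act continuously on $C^{1,\be}(\T)$, and the chord-arc bound $\|\CA(Z)\|_{L^\infty}\le N$ yields $|Z_\al|\ge 1/N$, which controls $1/\overline{Z_\al}$ in $C^{1,\be}$. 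Consequently $(Z_\al/|Z_\al|)_\al\in C^{1,\be}(\T)$, so $Z_\al/|Z_\al|\in C^{2,\be}(\T)$ with a $C_N$ bound.

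The second step is to recover the modulus $|Z_\al|$ separately. Because $Z'\in C^{1,\be}(\overline\disc)$ is non-vanishing on $\overline\disc$, $\log Z'$ is a single-valued element of $H(\overline\disc)$, with boundary values $\log|Z'|+i\arg Z'$. On $\T$ we have $Z_\al = iwZ'$, so $Z_\al/|Z_\al| = e^{i(\al+\pi/2)}e^{i\arg Z'}$; combined with the $C^{2,\be}$ regularity of $Z_\al/|Z_\al|$ just obtained and the a priori single-valued $C^{1,\be}$ function $\arg Z'$, this forces $\arg Z'\in C^{2,\be}(\T)$ (write $\theta=\arg Z'$ and solve $\theta_\al = (Z_\al/|Z_\al|)_\al/[i(Z_\al/|Z_\al|)]-1$). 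The Titchmarsh-type identity following Lemma \ref{lem: Titchmarsh} then gives $\log|Z'|=-\Hil(\arg Z')+\mathrm{const}$ on $\T$, so $\log|Z'|\in C^{2,\be}(\T)$, and exponentiation (using the two-sided bound on $|Z'|$) yields $|Z_\al|=|Z'|\in C^{2,\be}(\T)$ with a $C_N$ bound. Multiplying, $Z_\al=|Z_\al|\cdot(Z_\al/|Z_\al|)\in C^{2,\be}(\T)$, i.e.\ $\|Z\|_{X^{3,\be}}\le C_N$.

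For the compactness claim, I would verify that $\mathcal{N}_N$ is closed in $X^{2,\be}\times\R$: the map $\F$ is continuous on $U^{2,\be}\times\R$, and the condition $\|\CA(Z)\|_{L^\infty}\le N$ is preserved under $X^{2,\be}$-limits since $\CA(Z_n)$ converges pointwise, so $X^{2,\be}$-limits of points in $Q_N$ remain in $U^{2,\be}$ and in $Q_N$. The uniform $X^{3,\be}$ bound from \eqref{eqn:LocalCptEst}, combined with the compact embedding $X^{3,\be}\hookrightarrow X^{2,\be}$ from Lemma \ref{lemma:ZygmundEmbedding}, makes $\mathcal{N}_N$ relatively compact in $X^{2,\be}\times\R$; a closed, relatively compact set is compact. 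The main obstacle is the separate recovery of $|Z_\al|$: the equation only directly controls the unit tangent $Z_\al/|Z_\al|$, and it is the global holomorphicity of $Z'$ on $\disc$, through the harmonic-conjugate relation between $\log|Z'|$ and $\arg Z'$, that converts tangent-angle regularity into modulus regularity.
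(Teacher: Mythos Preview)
Your proof is correct and proceeds by a genuinely different route from the paper's. Both arguments begin identically: use Lemma~\ref{lemma:ComplexToReal} to obtain the pointwise identity on $\T$ and read off a $C^{1,\be}$ bound on $(Z_\al/|Z_\al|)_\al$, hence a $C^{2,\be}$ bound on the unit tangent $Z_\al/|Z_\al|$. The divergence comes in the ``recovery of $|Z_\al|$'' step. The paper takes a geometric detour: it passes to the arclength parametrization $\gamma(\sigma)$, uses the Frenet--Serret equation $\gamma_{\sigma\sigma}=ik\gamma_\sigma$ to bound $\gamma$ in $C^{3,\be}$, and then invokes the proof of the Kellogg--Warschawski theorem to transfer that regularity back to the conformal parametrization $Z$. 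Your argument stays entirely within the holomorphic framework: since $Z'$ is nonvanishing on $\overline\disc$, $\log Z'$ is single-valued and holomorphic, so $\log|Z'|$ and $\arg Z'$ are linked by the Hilbert transform on $\T$; the $C^{2,\be}$ control of the unit tangent yields $\arg Z'\in C^{2,\be}$, whence $\log|Z'|\in C^{2,\be}$, and multiplication recovers $Z_\al\in C^{2,\be}$.

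Your route is more self-contained (no appeal to the Kellogg--Warschawski machinery) and is closer in spirit to the conformal tools already developed in the paper; the paper's route, on the other hand, is parametrization-agnostic and would adapt more readily to settings without a global holomorphic logarithm. Both give the same uniform $C_N$ bound, and your closure-plus-compact-embedding argument for the compactness of $\mathcal N_N$ is fine.
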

\begin{proof}
Let us take an arbitrary $(Z,c) \in \mathcal N_N$. By the definition of $\mathcal N_N$, We have $\F(Z,c)=0$ and $\|(Z,c)\|_{X^{2,\beta}\times \R}\le N$ and $\left\|\frac1{Z_\alpha}\right\|_{L^\infty(\T)}\le N$. It remains to show $\|Z\|_{C^{3,\beta}(\T)}\le C_N$ for some $C_N>0$. In the following, $C_N$ denotes a generic constants which may be enlarged from step to step. It follows from $\F(Z,c)=0$ and Lemma \ref{lemma:ComplexToReal} that
\begin{equation}
\label{eqn:CurvatureEqn}\left( \frac{Z_\al}{\abs{Z_\al}} \right)_\al = iZ_\alpha - \frac{c^2}{2} Z\Hil(|Z|^2)_\alpha + \frac{ic^2}{8}\frac{[\overline{\Cau(|Z|^2)_\alpha}]^2}{\overline{Z}_\alpha} \quad \text{ on } \T.
\end{equation}
The bounds on $Z$, $c$ and $\frac1{Z_\alpha}$ now imply that 
\begin{equation}
    \left\| \left( \frac{Z_\al}{\abs{Z_\al}} \right)_\al\right\|_{C^{1,\beta}(\T)}\le C_N.
\end{equation} 
This implies that the curvature $\kappa=\frac{1}{iZ_\alpha}\left( \frac{Z_\al}{\abs{Z_\al}} \right)_\al$ also satisfies the bound
\begin{equation}\label{eq: kappa bound}
    \left\|\kappa \right\|_{C^{1,\beta}(\T)}\le C_N.
\end{equation}
Now, let $\sigma(\alpha)=\int_0^\alpha |Z_{\alpha'}(e^{i\al'})|~d\alpha'$ be the arclength parameter of the boundary curve, and denote its inverse by $\al(\sigma)$. Let $\y(\sigma)=Z(e^{i\al(\sigma)})$ be the arclength reparametrization of the boundary curve. Denote by $L=\int_0^{2\pi} |Z_{\alpha'}(e^{i\al'})|~d\alpha'$ its total arclength. We then have
\begin{equation}\label{eq: arclength p bound}
    \frac1{C_N}\le L\le C_N,~\|\sigma(\al)\|_{C^{2,\beta}(\T)}\le C_N, ~\|\alpha(\sigma)\|_{C^{2,\beta}([0,L])}\le C_N.
\end{equation}
Let $k(\sigma)=\kappa(\alpha(\sigma))$ be the curvature in arclength parameter. We have from \eqref{eq: kappa bound} and \eqref{eq: arclength p bound} that
\begin{equation}\label{eq: k curvature bound}
    \|k\|_{C^{1,\beta}([0,L])}\le C_N.
\end{equation}
By the Frenet-Serret formulae, we can write
\begin{equation}
\label{eqn:Frenet-Serret}
\y_{\sigma\sigma} = ik\y_\sigma.
\end{equation}
It follows from \eqref{eq: k curvature bound} and regularity of solutions to \eqref{eqn:Frenet-Serret} that
\begin{equation}\label{eq: 3 beta bound on gamma}
    \|\gamma\|_{C^{3,\beta}([0,L])}\le C_N.
\end{equation}
The $C^{3,\beta}$ bound of the arclength parametrization does not imply $C^{3,\beta}$ bound of an arbitrary parametrization in general. However, it is enough to recover $C^{3,\beta}$ bound of the {\it conformal} parametrization of the domain and its boundary curve. In fact, we can apply the proof (not just the statement) of Theorem 3.6 in \cite{Pom1} (Kellogg-Warschawski theorem) to deduce
\begin{equation}\label{eqn:LocalCptnessEst}
    \|Z\|_{C^{3,\beta}(\T)}\le C_N
\end{equation}
from \eqref{eq: 3 beta bound on gamma},
noting that the constants in the proof can be made uniform provided that \eqref{eq: arclength p bound}, \eqref{eq: 3 beta bound on gamma} hold. The proof is complete.
\end{proof}

\subsection{The Global Curve of Solutions}

We now arrive at our main result:

\begin{thm}
    \label{thm:MainGlobal}
    Let $(Z_0,\pm c_{mk}) \in \mathcal{T}$ be given, where $c_{mk}$ is given by \eqref{eqn:BifurcationValues}. Then, there exists in $X^{2,\be}\times\R$ a real analytic curve (corresponding to each sign of $\pm c_{mk}$)
    \begin{equation}
    \label{eqn:MainCurve}
        \Curve^{k,\pm} = \{ (\widetilde{Z}(s),\widetilde{c}(s) : s \in \R \}
    \end{equation}
    such that the following hold:
    \begin{enumerate}[label=(\roman*)]
    \item $(\Ztil(s),\ctil(s)) \in \mathcal N$ for all $s\in\R$ (recall that $\mathcal N$, $\mathcal T$ are defined in \eqref{eqn:SolnSet});
    \item $(\Ztil(0),\ctil(0)) = (Z_0,\pm c_{mk})$, $\Ztil^\pr(s)(w) = w^{mk+1}$;
    \item there exists an open neighborhood $V$ of $(Z_0,c)$ in $U^{2,\beta}\times\R$ such that $(\mathcal N\setminus\mathcal T)\cap V= \{(\widetilde{Z}(s),\widetilde{c}(s)):0<|s|<\eps\}$, $\eps>0$ sufficiently small;
    \item one of the following alternatives occurs:
    \begin{enumerate}[label=(\alph*)]
    \item
    \[
    \max\left\{ \norm{\Ztil(s)\vert_\T}_{C^1(\T)}, \norm{\CA(\widetilde{Z}(s))}_{L^\infty(\T^2)} \right\} \to +\infty \text{ as } \abs{s} \to +\infty,
    \]
    \item there exists $p>0$ such that
    \[
    (\Ztil(s+p),\ctil(s+p)) = (\Ztil(s),\ctil(s)) \text{ for all } s \in \R.
    \]
    \end{enumerate}
    \end{enumerate}
\end{thm}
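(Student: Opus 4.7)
My plan is to apply Theorem \ref{thm:AnalyticGlobalBifurcation} to $F = \F$ with $X = X^{2,\be}$, $Y = X^{0,\be}$, $\open = U^{2,\be}\times\R$ (after translating the trivial solution $Z_0$ to the origin), the exhaustion $\{Q_N\}_{N\in\N}$ of \eqref{eqn:ClosedSets}, parameter value $\lam_0 = \pm c_{mk}$, and kernel generator $\xi_0(w) = w^{mk+1}$. All four hypotheses are already in place: (H1) is immediate from $\F(Z_0, c) \equiv 0$; the three sub-conditions of (H2) at $\pm c_{mk}$ are exactly what is checked in the proof of Theorem \ref{thm:MainLocal}; (H3) is Lemma \ref{lem: Fredholm}; and the required compactness of $\mathcal N \cap Q_N$ in $X^{2,\be}\times\R$ is Lemma \ref{lemma:SolnSetLocCpt}. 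The abstract conclusions (C1)--(C4) then yield (i)--(iii) of Theorem \ref{thm:MainGlobal} term by term, with the local uniqueness in (iii) already recorded in Theorem \ref{thm:MainLocal}.

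The remaining work is the translation of the alternatives in (C5). The periodic alternative (C5)(b) is literally (iv)(b), so the substance reduces to showing that (C5)(a) -- the curve eventually leaves every $Q_N$ -- forces (iv)(a), i.e.\ $\max\{\|\Ztil(s)\vert_\T\|_{C^1(\T)}, \|\CA(\Ztil(s))\|_{L^\infty(\T^2)}\} \to +\infty$. I would argue the contrapositive: along some $s_n \to \infty$ assume both $\|\Ztil(s_n)\|_{C^1(\T)}$ and $\|\CA(\Ztil(s_n))\|_{L^\infty(\T^2)}$ stay bounded, and aim to conclude $(\Ztil(s_n),\ctil(s_n))$ is confined to some fixed $Q_N$, contradicting (C5)(a). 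This splits into two subclaims: a uniform bound on $|\ctil(s_n)|$, and, given such a bound, a uniform bound on $\|\Ztil(s_n)\|_{X^{2,\be}}$.

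The second subclaim is a direct rerun of the bootstrap in the proof of Lemma \ref{lemma:SolnSetLocCpt}: dividing the pointwise identity of Lemma \ref{lemma:ComplexToReal} by $Z_\al$ writes the curvature $\kappa$ as an explicit algebraic combination of $Z$, $c$, and Cauchy/Hilbert transforms of $|Z|^2$ and $Z_\al/|Z_\al|$ that are Hölder controlled by the $C^{1,\be}$ and chord-arc data; the Frenet--Serret ODE then upgrades this $C^{0,\be}$ control on $\kappa$ to $C^{2,\be}$ (in fact $C^{3,\be}$) control on the arclength reparametrization, and Kellogg--Warschawski passes this back to the conformal parametrization $\Ztil(s_n)$. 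The real difficulty is the first subclaim, the uniform bound on $|\ctil(s_n)|$. My plan is to assume $|\ctil(s_n)| \to \infty$ toward a contradiction, divide $\F(\Ztil(s_n),\ctil(s_n)) = 0$ by $\ctil(s_n)^2$, extract a subsequential limit $Z_*$ from the uniform $C^1$-plus-chord-arc bounds (using an Arzelà--Ascoli compactness in $C^{1-\eps}$), and observe that in the limit \eqref{DBC main} reduces to $\Cau(Z_*\Hil(|Z_*|^2)_\al) = 0$. A Lemma \ref{lem: vanishing}-type analysis would then force $|Z_*|$ constant on $\T$, so that $Z_* = \lam e^{i\tta_0} w$ is a rotated disk; ruling this out using the global nontriviality of the continuation branch $\Curve^{k,\pm}$ and the local uniqueness in Theorem \ref{thm:MainLocal} closes the argument. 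Making this limit passage genuinely quantitative -- so that it really contradicts the non-return of $\Curve^{k,\pm}$ to $\mathcal T$ -- is where I expect the main technical difficulty to lie, far more than the bootstrap of the second subclaim.
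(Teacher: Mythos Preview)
Your framework matches the paper, and the identification of what remains after (C5) is correct. The genuine gap is in your ``easy'' second subclaim. A direct rerun of Lemma \ref{lemma:SolnSetLocCpt} does not work: that lemma starts from $X^{2,\be}$ data, so the inputs to $\Cau$ and $\Hil$ in \eqref{eqn:CurvatureEqn} are H\"older and those operators are bounded. Here you assume only $C^1(\T)$ control on $\Ztil(s_n)$, so $(|Z|^2)_\al$ is merely continuous, and $\Cau,\Hil$ are famously \emph{not} bounded on $C^0(\T)$; your claimed ``H\"older control by the $C^{1,\be}$ data'' presupposes regularity you do not have. The paper bridges this with a two-stage detour: first use $L^2(\T)$-boundedness of $\Cau,\Hil$ in \eqref{eqn:CurvatureEqn} to get $\kappa_n\in L^2$ uniformly, pass to arclength via Frenet--Serret to put the boundary curve in $H^2$, and invoke the Sobolev embedding $H^2\hookrightarrow C^{1,1/2}$ (plus Kellogg--Warschawski) to obtain uniform $C^{1,1/2}$ control on $\Ztil(s_n)$; only \emph{then} does the H\"older bootstrap you describe apply and produce the $C^{2,\be}$ bound. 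So you have inverted the allocation of difficulty.

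On your first subclaim you are in fact more careful than the paper: the paper simply writes \eqref{norm blow up condition} without $|c|$ and never discusses bounding $|\ctil(s)|$, even though the sets $Q_N$ of \eqref{eqn:ClosedSets} do constrain $|c|$. Your limit strategy (divide by $\ctil(s_n)^2$, extract a sublimit $Z_*$ with $\Cau(Z_*\Hil(|Z_*|^2)_\al)=0$, force $|Z_*|$ constant on $\T$) is plausible, but the closing step---ruling out convergence to a disk via ``global nontriviality and local uniqueness''---is not automatic as stated, since nothing in Theorem \ref{thm:AnalyticGlobalBifurcation} prevents the global curve from revisiting neighborhoods of $\mathcal T$ at large $|s|$; a quantitative stability argument would be required there.
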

\begin{proof}
    This result largely follows from Theorem \ref{thm:AnalyticGlobalBifurcation}. We verified the corresponding statements of \textit{(H1)} and \textit{(H2)} in proving the local bifurcation result of Theorem \ref{thm:MainLocal}. Moreover, \textit{(H3)} follows from Lemma \ref{lem: Fredholm} and \textit{(H4)} is given by Lemma \ref{lemma:SolnSetLocCpt}. Theorem  \ref{thm:AnalyticGlobalBifurcation} now implies the above conclusions with \textit{(iv)(a)} replaced by
    \begin{equation}\label{norm blow up condition}
    \max\left\{ \normX{\widetilde{Z}(s)}{2}{\be}, \norm{\CA(\widetilde{Z}(s))}_{L^\infty(\T^2)} \right\} \to +\infty \text{ as } \abs{s} \to +\infty.
    \end{equation}
    Thus, what remains to be proven is the improvement of $X^{2,\beta}$ norm blow-up to $C^1(\T)$ norm blow-up. 

    We shall proceed in stages. Assume we are in the case when \eqref{norm blow up condition} occurs. Furthermore, assume that there exists a $C>0$ such that $\norm{\CA(\widetilde{Z}(s))}_{L^\infty(\T^2)}\le C$. In the following, we always denote such a generic constant by $C$ and may change its value from step to step. We first show that $\normX{\widetilde{Z}(s)}{1}{\be}\to\infty$ as $|s|\to\infty$. If not, there would exist a sequence of solutions $Z_n\in \mathcal N$ such that $\normX{Z_n}{1}{\be}\leq C$, $\norm{\CA(Z_n)}_{L^\infty(\T^2)}\le C$ but $\|Z_n\|_{X^{2,\beta}}\to\infty$. Using  \eqref{eqn:CurvatureEqn} similarly to the proof of Lemma \ref{lemma:SolnSetLocCpt}, we deduce that $\kap_n=\frac{1}{i(Z_n)_\alpha}\left(\frac{(Z_n)_\al}{|(Z_n)_\al|}\right)_\alpha$ satisfies
    \begin{equation}
        \|\kappa_n\|_{C^{0,\beta}(\T)}\le C.
    \end{equation}
    As in the proof of Lemma \ref{lemma:SolnSetLocCpt}, denote the arclength parameter of the boundary curve $Z_n\big|_\T$ by $\sigma_n$, and the inverse parametrization by $\alpha(\sigma_n)$. Denote by $L_n$ the total arclength, and let $\gamma_n(\sigma_n)=Z_n(e^{i\alpha(\sigma_n)})$. We have 
    \begin{equation}
        \frac1C\le L_n\le C,~\|\sigma_n(\alpha)\|_{C^{1,\beta}(\T)}\le C,~\|\alpha(\sigma_n)\|_{C^{1,\beta}([0,L_n])}\le C.
    \end{equation}
    Let $k_n(\sigma_n)=\kappa_n(\alpha(\sigma_n))$ be the curvature in arclength parameter. We have
    \begin{equation}
        \|k_n\|_{C^{0,\beta}([0,L_n])}\le C.
    \end{equation}
    We again deduce from the Frenet-Serret formula that 
    \begin{equation}
        \|\gamma_n\|_{C^{2,\beta}([0,L_n])}\le C,
    \end{equation}
    and from the proof of Theorem 3.6 in \cite{Pom1} that
    \begin{equation}
        \|Z_n\|_{C^{2,\beta}(\T)}\le C.
    \end{equation}
    This contradicts the assumption that $\|Z_n\|_{X^{2,\beta}}\to\infty$.

    
    Assuming the same setup as in the previous stage, we now show that $\left\|\widetilde{Z}(s)\right\|_{C^1(\T)}\to\infty$ as $|s|\to\infty$. If not, there would exist a sequence of solutions $Z_n\in \mathcal N$ such that $\|Z_n\|_{C^1(\T)}\le C$, $\norm{\CA(Z_n)}_{L^\infty(\T^2)}\le C$ but $\|Z_n\|_{C^{1,\beta}(\T)}\to\infty$. This time we cannot deduce a $C^0(\T)$ bound on $\kappa_n$ directly from \eqref{eqn:CurvatureEqn}, as neither $\Cau$ nor $\Hil$ is bounded on $C^{0}(\T)$. Instead, by the $L^2(\T)$ boundedness of $\Cau$ and $\Hil$, obtain 
    \begin{equation}
        \|\kappa_n\|_{L^2(\T)}\le C,
    \end{equation}
    and
    \begin{equation}
        \frac1C\le L_n\le C,~\|\sigma_n(\alpha)\|_{C^{1}(\T)}\le C,~\|\alpha(\sigma_n)\|_{C^{1}([0,L_n])}\le C.
    \end{equation}
    We deduce similary as before that
    \begin{equation}
        \|k_n\|_{L^2([0,L_n])}\le C.
    \end{equation}
    By the Frenet-Serret formula, we get
    \begin{equation}
        \|\gamma_n\|_{H^2([0,L_n])}\le C.
    \end{equation}
    By the Sobolev embedding theorem, we get
    \begin{equation}
        \|\gamma_n\|_{C^{1,\frac12}([0,L_n])}\le C,
    \end{equation}
    which implies 
    \begin{equation}
        \|Z_n\|_{C^{1,\frac12}(\T)}\le C
    \end{equation}
    similarly as above. If $\beta\le \frac12$, this estimate contradicts the condition $\|Z_n\|_{C^{1,\beta}(\T)}\to\infty$, and the the proof is complete. If not, just apply the previous stage again to obtain 
    \begin{equation}
        \|Z_n\|_{C^{2,\frac12}(\T)}\le C
    \end{equation}
    to finish the proof.
\end{proof}

%

\section{Regularity of the Free Boundary}\label{sec: ana}

In Theorem \ref{thm:MainGlobal}, we have constructed rotational traveling wave solutions whose free boundary curves have $C^{2,\be}$ regularity. However, the boundaries of any such rotational traveling waves are, in fact, real analytic. This is similar to a well-known result of Lewy's in the case of gravity waves on the surface of a fluid layer \cite{Lewy1}. Lewy's result was extended to incorporate the effects of surface tension by Matei in \cite{Mat1}. The proof in \cite{Mat1} can be generalized to our setting and produce the same result.
Indeed, we have the following:
\begin{thm}
Consider a simply connected $C^{2,\beta}$ domain $\CD\subset \R^2$, a real-valued function $\varphi\in C^{2,\beta}(\overline {\CD})$, and constants $c, Q \in \mathbb R$, $\sigma>0$, such that
\begin{align}
    \Delta \varphi &=0 \quad \text{ on }\CD,\\
    (c~(-y,x)- \nabla \varphi )\cdot n&=0 \quad \text{ on }\partial \CD,\\
    -c~(-y,x)\cdot \nabla \varphi +\frac12|\nabla\varphi|^2-\sigma\kappa &= Q \quad \text{ on }\partial \CD.
\end{align}
Here $n$ is the outword unit normal vector on $\partial \CD$ and $\kappa$ is the curvature of $\partial \CD$. Then $\partial \CD$ is real analytic.
\end{thm}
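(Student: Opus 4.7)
The plan is to mirror the strategy of Matei, using the conformal formulation already developed in Section 2 of the paper as the natural vehicle for a bootstrap / analyticity argument. Let $Z: \disc \to \CD$ be a Riemann map; since $\partial\CD \in C^{2,\be}$, the Kellogg--Warschawski theorem (invoked already in the proof of Lemma \ref{lemma:SolnSetLocCpt}) gives $Z \in C^{2,\be}(\overline\disc)$ with $Z_\al$ nonvanishing on $\overline\disc$. Pulling back $\varphi$ by $Z$ produces a $C^{2,\be}$ harmonic function $\phi$ on $\overline\disc$, with harmonic conjugate $\theta$ and holomorphic potential $\Phi = \phi + i\theta \in C^{1,\be}(\overline\disc)$. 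Repeating verbatim the computation leading from \eqref{original KBC}--\eqref{original DBC} through \eqref{DBC 2} and ultimately \eqref{aug main eq}, one obtains on $\T$ a relation of the shape
\begin{equation}
\label{eqn:RegularityEquation}
2\sigma\left(\frac{Z_\al}{|Z_\al|}\right)_\al = 2ibZ_\al - c^2 Z\Hil(|Z|^2)_\al + \tfrac{ic^2}{4}\frac{[\overline{\Cau(|Z|^2)_\al}]^2}{\overline{Z_\al}},
\end{equation}
with $b$ a real constant determined by the problem data.

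Next I would run a bootstrap on $\partial\CD$. Assume inductively $\partial\CD \in C^{k,\be}$ and $\varphi \in C^{k,\be}(\overline{\CD})$ for some $k \geq 2$. Then $\nabla\varphi \in C^{k-1,\be}(\overline{\CD})$, and the dynamical boundary condition expresses the curvature as $\sigma\kappa = -c(-y,x)\cdot\nabla\varphi + \tfrac12|\nabla\varphi|^2 - Q$, so $\kappa \in C^{k-1,\be}(\partial\CD)$. By the same Frenet--Serret / Kellogg--Warschawski argument used in Lemma \ref{lemma:SolnSetLocCpt}, this forces $\partial\CD \in C^{k+1,\be}$ and $Z \in C^{k+1,\be}(\overline\disc)$. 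Finally, the Neumann problem for the harmonic function $\varphi$ with data $\nabla\varphi\cdot n = c(-y,x)\cdot n \in C^{k,\be}(\partial\CD)$ on a $C^{k+1,\be}$ domain yields $\varphi \in C^{k+1,\be}(\overline{\CD})$ by elliptic regularity. Iterating gives $\partial\CD \in C^\infty$ and $\varphi \in C^\infty(\overline{\CD})$.

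For the final upgrade from $C^\infty$ to real analyticity I would apply the partial hodograph transform of Kinderlehrer--Nirenberg followed by Morrey's analyticity theorem for nonlinear elliptic systems with real analytic data. Flatten the boundary locally via a $C^\infty$ diffeomorphism; the harmonic equation for $\varphi$ and the Bernoulli condition (which is a second-order fully nonlinear boundary condition in the parametrization of $\partial\CD$, because of the curvature) together form a nonlinear elliptic boundary value problem whose coefficients (algebraic expressions in $\nabla\varphi$, $x$, $y$ and their derivatives, together with the constant $c, \sigma, Q$) are real analytic in their arguments. A partial hodograph exchanging a suitable coordinate with $\varphi$ (or with the stream function of the flow in the rotating frame, which is well defined and has nonvanishing gradient away from stagnation points by the Bernoulli relation) converts the free boundary into a coordinate hyperplane and the resulting system satisfies the hypotheses of Morrey's theorem (see Morrey, \emph{Multiple Integrals in the Calculus of Variations}, \S 6.7). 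Unwinding the hodograph gives real analyticity of $\partial\CD$.

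The main obstacle I expect is the analyticity step: while the bootstrap to $C^\infty$ is routine once \eqref{eqn:RegularityEquation} is in hand, the verification that the ellipticity and nondegeneracy conditions of Morrey's theorem apply after the hodograph transform requires attention. In particular, one must ensure that the fluid velocity in the rotating frame does not vanish on $\partial\CD$ (so that the hodograph change of variables is nonsingular), and that the fully nonlinear boundary operator coming from the surface-tension term is genuinely elliptic of the correct type; both points are addressed in Matei's setup for translational capillary waves and carry over to the rotational case with only notational changes, since the only structural effect of rotation is the replacement of $-c\varphi_x$ by $-c(-y,x)\cdot\nabla\varphi$, which is still a linear-in-$\nabla\varphi$ real analytic expression.
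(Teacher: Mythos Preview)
Your endgame---partial hodograph transform followed by Morrey's analyticity theorem---is exactly the paper's route, and your identification of the rotating-frame stream function as the correct hodograph variable is on target. Two points, however, separate your sketch from a complete proof.

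First, the bootstrap to $C^\infty$ is an unnecessary detour: Morrey's Theorem 6.7.6$'$ applies already at the $C^{2,\beta}$ level and delivers real analyticity in one step, so the entire inductive argument via the conformal equation \eqref{eqn:RegularityEquation} (and the Neumann elliptic-regularity loop) can be dropped. The paper goes straight from $C^{2,\beta}$ to analytic by introducing $\Psi=\psi+\tfrac{c}{2}(x^2+y^2)$, performing the hodograph $(p,q)=(x,\Psi-\mu)$, writing down the transformed interior equation and Bernoulli boundary condition for the height function $\eta$, and checking ellipticity plus the coercivity (Lopatinski--Shapiro) condition by hand.

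Second, and more substantively, you defer the nondegeneracy of the hodograph to Matei with the remark that it ``carries over with only notational changes,'' but this is precisely the point where the rotational case differs structurally: here $\Delta\Psi=2c$, not $0$, so $\Psi$ is strictly subharmonic (for $c>0$) rather than harmonic. It is the strong maximum principle together with the Hopf lemma---not the Bernoulli relation, which is what you cite---that forces $\partial_n\Psi>0$ on $\partial\CD$ and makes the change of variables invertible. Relatedly, you omit the case $c=0$: then $\Psi$ is harmonic and constant on the boundary, hence identically constant, and the hodograph degenerates completely. The paper handles this separately by observing that $\nabla\varphi\equiv 0$, so the Bernoulli condition reduces to $\kappa\equiv -Q/\sigma$ and $\partial\CD$ is a circle.
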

\begin{proof}
    Let $\psi$ be a conjugate harmonic function of $\varphi$ on $\CD$. We have $\varphi_x=\psi_y$, $\varphi_y=-\psi_x$ on $\overline \CD$. Thus
    \begin{equation}
        \nabla\left(\frac{c}{2}(x^2+y^2)+\psi\right)\cdot t = 0 \quad \text{ on }\partial \CD.
    \end{equation}
    Here $t=n^\perp$ is the unit tangent vector on $\partial \CD$. It follows that there exists a constant $\mu\in \R$ such that
    \begin{equation}
        \psi+\frac{c}2(x^2+y^2)=\mu \quad \text{ on }\partial \CD.
    \end{equation}
    Let $\Psi=\psi+\frac{c}2(x^2+y^2)$, we have
    \begin{align}
        \Delta \Psi &=2c \quad \text{ on }\CD,\label{Laplace Psi}\\
    \Psi&=\mu \quad \text{ on }\partial \CD,\label{bdry Psi}\\
    \frac12|\nabla\Psi|^2-\frac{c^2}2(x^2+y^2)-\sigma\kappa &= Q \quad \text{ on }\partial \CD.\label{dyn bdry Psi}
    \end{align}
    If $c=0$, we have from \eqref{Laplace Psi} and \eqref{bdry Psi} that $\psi=\Psi=\mu$ on $\CD$. Thus $\varphi$ is constant on $\CD$, and $\kappa=-\frac{Q}{\sigma}$ on $\partial \CD$. Thus $\partial \CD$ is a circle and is obviously analytic. In the following, we assume $c>0$ without loss of generality. The proof for the case $c<0$ is similar. By \eqref{Laplace Psi}, $\Psi$ is subharmonic. By \eqref{bdry Psi} and the strong maximum principle, $\Psi<\mu$ on $\CD$, and $\nabla\Psi\cdot n>0$ on $\partial \CD$. Let $(x_0,y_0)$ be an arbitrary point on $\partial \CD$. By translation and rotation of coordinates, we may assume without loss of generality that $(x_0,y_0)=(0,0)$ and $\partial \CD$ is tangent to the $x$-axis at $(0,0)$, with $n=(0,1)$ there, while \eqref{dyn bdry Psi} is transformed to 
    \begin{equation}
        \frac12|\nabla\Psi|^2-\frac{c^2}2[(x+x_0)^2+(y+y_0)^2]-\sigma\kappa = Q \quad \text{ on }\partial \CD.
    \end{equation}
    In a neighborhood of $(x,y)=(0,0)$, introduce new coordinates $(p,q)=(x,\Psi(x,y)-\mu)$. Denote by $B_r$ a sufficiently small disc of radius $r$ centered at $(0,0)$. Since $\frac{\partial(p,q)}{\partial(x,y)}(0,0)=\begin{pmatrix}
        1& 0 \\ 0 & \nabla \Psi\cdot n(0,0)
    \end{pmatrix}$ is invertible, $(x,y)\mapsto (p,q)$ maps $B_r\cap \CD$ to an open subset $G$ of the lower half plane, with $B_r\cap \partial \CD$ mapped to a line segment $S$ contained in the $p$-axis. Denote the inverse of the above map by $(x,y)=(p,\eta(p,q))$. It suffices to show that $\eta$ is real analytic on $B_{r'}\cap (G\cup S)$ for some $r'>0$, as $\partial \CD$ near $(x,y)=(0,0)$ is the graph of $y=\eta(x,0)$. By implicit differentiation, we obtain
    \begin{equation}
        \Psi_{x}=-\frac{\eta_p}{\eta_q},~\Psi_y=\frac1{\eta_q},
    \end{equation}
    \begin{equation}
        \Psi_{xx}=-\left(\frac{\eta_p}{\eta_q}\right)_p+\frac{\eta_p}{\eta_q}\left(\frac{\eta_p}{\eta_q}\right)_q,~\Psi_{yy}=\frac1{\eta_q}\left(\frac1{\eta_q}\right)_q,
    \end{equation}
    also, when parametrizing $\kappa$ by $p\subset S$, we have
    \begin{equation}
        \kappa(p) = \frac{\eta_{pp}(p,0)}{(1+\eta_p(p,0))^{\frac32}}.
    \end{equation}
    We obtain that $\eta$ satsifies the following equation and boundary conditions on $G$:
    \begin{align}
        -\left(\frac{\eta_p}{\eta_q}\right)_p+\frac{\eta_p}{\eta_q}\left(\frac{\eta_p}{\eta_q}\right)_q+\frac1{\eta_q}\left(\frac1{\eta_q}\right)_q=2c\quad &\text{ on }G, \label{eq: eta}\\
        -\sigma\frac{\eta_{pp}}{(1+\eta_p)^{\frac32}} +\frac12\left(\frac{\eta_p^2}{\eta_q^2}+\frac{1}{\eta_q^2}\right)-\frac{c^2}{2}\left[(p+x_0)^2+(\eta+y_0^2)\right]=Q\quad &\text{ on } S. \label{bdry cond: eta}
    \end{align}
    Note that $\Psi_x(0,0)=0$, $\Psi_y(0,0)>0$. Thus $\eta_p(0,0)=0$, $\eta_q(0,0)>0$. We linearize \eqref{eq: eta} and \eqref{bdry cond: eta} about $\eta$, freeze the coefficients at $(0,0)$ and take the principle part to get the following linear equation and boundary condition on the lower half plane:
    \begin{align}
        H_{pp}+\frac{1}{\eta_q^2(0,0)}H_{qq}=0\quad & \text{ if }q<0, \label{eq: H}\\
        H_{pp}=0\quad &\text{ if }q=0. \label{bdry cond: H}
    \end{align}
    We verify that \eqref{eq: H} and \eqref{bdry cond: H} form an elliptic equation with coercive boundary condition for $H$ (see \cite{Mat1}, \cite{Kind1} for definition). Ellipticity is obvious. For coerciveness, we look for solutions of the form $H(p,q)=e^{i\lambda p}I(q)$ with $\lambda\ne 0$ that is exponentially decaying for $q<0$. We obtain 
    \begin{align}
        I''(q)-\lambda^2 \eta_q^2(0,0)=0\quad & \text{ if }q<0,\\
        I(0)=0\quad &\text{ if }q=0. 
    \end{align}
    We get $I(q)=C(e^{\lambda \eta_q(0,0)q}-e^{-\lambda \eta_q(0,0)q})$ with $C\in \R$. The exponential decay condition forces $C=0$. Thus the boundary condition \eqref{bdry cond: H} is coercive for \eqref{eq: H}. It follows from Theorem 6.7.6' in \cite{Mor1} that $\eta$ is real analytic in $B_{r'}\cap (G\cup S)$ for some small $r'>0$. The proof is complete.
\end{proof}

\end{document}